\documentclass{article}

\usepackage{amsmath}
\usepackage{amssymb}
\usepackage{amsthm}
\usepackage{graphics}
\usepackage{a4wide}

\newtheorem{theorem}{Theorem}[section]
\newtheorem{definition}[theorem]{Definition}
\newtheorem{proposition}[theorem]{Proposition}
\newtheorem{corollary}[theorem]{Corollary}
\newtheorem{lemma}[theorem]{Lemma}
\newtheorem{fact}[theorem]{Remark}

\newtheorem{notation}[theorem]{Notation}

\title{Quantitative results on the Ishikawa iteration of Lipschitz pseudo-contractions}
\author{Lauren\c{t}iu Leu\c{s}tean$^{1,2}$, Vlad Radu$^{1}$ and Andrei Sipo\c s$^{1,2}$\\[0.2cm]
\footnotesize ${}^1$ Faculty of Mathematics and Computer Science, University of Bucharest,\\
\footnotesize Academiei 14,  P.O. Box 010014, Bucharest, Romania\\[0.1cm]
\footnotesize ${}^2$ Simion Stoilow Institute of Mathematics of the Romanian Academy,\\
\footnotesize P. O. Box 1-764, 014700 Bucharest, Romania\\[0.1cm]
\footnotesize E-mails:  laurentiu.leustean@unibuc.ro, vlad.radu2013@yahoo.com,  Andrei.Sipos@imar.ro
}

\date{}

\parindent0pt

\begin{document}

\maketitle

\begin{abstract}
\noindent  We compute uniform rates of metastability for the Ishikawa iteration of a Lipschitz pseudo-contractive
self-mapping of a compact convex subset of a Hilbert space. This extraction is an instance of the proof mining
program that aims to apply tools from mathematical logic in order to extract the hidden quantitative content
of mathematical proofs. We prove our main result by applying methods developed by Kohlenbach, the first  author and 
Nicolae for obtaining quantitative versions of strong convergence results for generalized Fej\'er monotone sequences in compact subsets of metric spaces.\\

\noindent {\em MSC:}  47J25; 47H09;  03F10.\\

\noindent {\em Keywords:} Proof mining; Lipschitz pseudo-contractions; Ishikawa iteration; Effective bounds; Metastability.

\end{abstract}

\section{Introduction}

Let $H$ be a real Hilbert space, $C\subseteq H$ a nonempty convex subset and $T:C\to C$ be a mapping.  

We say that $T$ is a \emph{pseudo-contraction} if for all $x,y\in C$,
\begin{equation} 
 \|Tx - Ty\|^2 \leq \|x-y\|^2 + \|(x-Tx)-(y-Ty)\|^2. \label{def-pseudo-contraction}
\end{equation}
This class of nonlinear mappings was introduced in the 1960s by Browder and Petryshyn \cite{BroPet67}. 
Its significance lies in the following fact: an operator $T$ is a pseudo-contraction if and only if its complement $U:=Id - T$ is monotone, i.e. for all $x,y \in C$ we have that
$$\langle Ux-Uy, x -y \rangle \geq 0.$$

Monotone operators arise naturally in the study of partial differential equations: often such an 
equation can be written in the form $U(x) = 0$ (or $0 \in U(x)$ when considering multi-valued operators). 
Finding a zero of $U$ is equivalent to finding a fixed point of its complement $T:= Id - U$, hence the problem 
of finding fixed points of nonlinear operators is tightly linked to that of finding solutions to nonlinear equations.\\[1mm]
It is well-known that the classical method of Picard iterations, used to find the unique 
fixed point of a contraction, fails in the case of nonexpansive mappings, i.e. maps that only 
satisfy $\|Tx-Ty\|\leq\|x-y\|$ for all $x,y\in C$. Nevertheless, by considering an iteration of the form
$$x_0:=x, \quad x_{n+1} := \alpha_nTx_n + (1-\alpha_n)x_n,$$
where $(\alpha_n)_{n\in{\mathbb N}}$ is a sequence in $[0,1]$ satisfying some mild conditions, one 
obtains a sequence that converges (in some cases only weakly) to a fixed point of $T$. 
Such a scheme is called the {\em Mann iteration}. Efforts to extend this scheme to more general 
maps like pseudo-contractions were not successful. 
Later, Chidume and Mutangadura \cite{ChiMut01} would exhibit an example of a Lipschitzian 
pseudo-contractive map with a unique fixed point for which no Mann sequence converges. 

We recall that  $T$ is said to be {\em $L$-Lipschitzian} (for an $L>0$) if for all $x,y \in C$ we have that 
$\|Tx-Ty\|\leq L\|x-y\|$. Examples of Lipschitzian pseudo-contractions are strict 
pseudo-contractions (defined also in \cite{BroPet67}), hence, in particular, nonexpansive mappings.\\[1mm] 
Meanwhile, some alternate algorithms were proposed, the first of which being the one of 
Ishikawa \cite{Ish74}, who deployed it successfully in the case of Lipschitzian pseudo-contractions 
acting on a compact convex subset of a Hilbert space. It is defined as follows.

If $(\alpha_n)_{n\in{\mathbb N}},(\beta_n)_{n\in{\mathbb N}}$ are sequences in $[0,1]$, then the {\em Ishikawa iteration} starting with an 
$x\in C$ using the two sequences as weights is defined by:
\begin{equation}
x_0:=x, \quad x_{n+1}:= \alpha_nT(\beta_nTx_n+(1-\beta_n)x_n) + (1-\alpha_n)x_n. \label{def-Ishikawa}
\end{equation}
We recognize the Mann iteration in the special case where $\beta_n:=0$ for all $n\in{\mathbb N}$.

We introduce the following conditions that sequences $(\alpha_n)$, $(\beta_n)$ in $[0,1]$ may satisfy: 
\begin{eqnarray*}
\text{(A1)} & \lim\limits_{n\to\infty}\beta_n=0;\\
\text{(A2)} &  \sum\limits_{n=0}^{\infty}\alpha_n\beta_n = \infty;\\
\text{(A3)} & \alpha_n\leq\beta_n,\ \text{for all }n \in {\mathbb N}.
\end{eqnarray*}
As pointed out in \cite{Ish74}, an example of a pair of sequences satisfying all three conditions 
is $\alpha_n=\beta_n=\frac1{\sqrt{n+1}}$. 

We can now state the exact form of Ishikawa's 1974 strong convergence result for the above iteration.

\begin{theorem}\label{main-thm-Ishikawa}
Let $H$ be a Hilbert space, $C\subseteq H$ a nonempty convex compact subset, $T:C\to C$ a Lipschitzian 
pseudo-contraction and $(\alpha_n)$, $(\beta_n)$ sequences in $[0,1]$ that satisfy (A1)-(A3). 
Then, for all $x\in C$, the Ishikawa iteration starting with $x$, using $(\alpha_n)$ and $(\beta_n)$ as weights,
converges strongly to a fixed point of $T$.
\end{theorem}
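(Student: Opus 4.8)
The plan is to follow the classical route via a Fej\'er-type inequality anchored at a fixed point, obtained first by compactness. Since $T$ is $L$-Lipschitzian it is continuous, and $C$ is nonempty, convex and compact, so Schauder's fixed point theorem supplies some $p\in C$ with $Tp=p$; I fix such a $p$ for the main estimate (later, the role of $p$ will be taken over by the specific fixed point to which the iteration turns out to converge). Throughout I may assume $L\ge 1$. Writing $y_n:=\beta_nTx_n+(1-\beta_n)x_n$, so that $x_{n+1}=\alpha_nTy_n+(1-\alpha_n)x_n$, I abbreviate $u_n:=x_n-Tx_n$ and $w_n:=Tx_n-Ty_n$, and note the Lipschitz bound $\|w_n\|\le L\beta_n\|u_n\|$ together with the elementary identities $y_n-Ty_n=(1-\beta_n)u_n+w_n$ and $Ty_n-x_n=-(u_n+w_n)$.

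\textbf{Step 1 (the key recursive inequality).} I would expand $\|x_{n+1}-p\|^2$ using the Hilbert space identity $\|\lambda a+(1-\lambda)b\|^2=\lambda\|a\|^2+(1-\lambda)\|b\|^2-\lambda(1-\lambda)\|a-b\|^2$ (once with $\lambda=\alpha_n$ for $x_{n+1}$, and once with $\lambda=\beta_n$, inside, for $y_n$), then apply the pseudo-contraction inequality \eqref{def-pseudo-contraction} to the pairs $(x_n,p)$ and $(y_n,p)$ — using $p-Tp=0$ — to get $\|Tx_n-p\|^2\le\|x_n-p\|^2+\|u_n\|^2$ and $\|Ty_n-p\|^2\le\|y_n-p\|^2+\|y_n-Ty_n\|^2$. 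Collecting terms, the contributions of the two $-\lambda(1-\lambda)\|\cdot\|^2$ pieces combine with the defect terms, and after using (A3) ($\alpha_n\le\beta_n$) and Cauchy--Schwarz on the cross term $2(\alpha_n-\beta_n)\langle u_n,w_n\rangle$ one arrives at
\[
\|x_{n+1}-p\|^2\;\le\;\|x_n-p\|^2\;+\;\alpha_n\Big(-\beta_n+(2+2L)\beta_n^2+L^2\beta_n^3\Big)\|x_n-Tx_n\|^2 .
\]
By (A1) there is $N_0\in{\mathbb N}$, depending only on $L$ and a rate of convergence for $(\beta_n)$ — crucially \emph{not} on $p$ — such that the parenthesis is $\le-\beta_n/4$ for all $n\ge N_0$, whence
\[
\|x_{n+1}-p\|^2\;\le\;\|x_n-p\|^2\;-\;\tfrac14\,\alpha_n\beta_n\,\|x_n-Tx_n\|^2\qquad(n\ge N_0).
\]

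\textbf{Step 2 (the defect tends to $0$ along a subsequence) and Step 3 (convergence).} The inequality of Step 1 shows $(\|x_n-p\|^2)_{n\ge N_0}$ is nonincreasing, hence convergent, and telescoping yields $\sum_{n\ge N_0}\alpha_n\beta_n\|x_n-Tx_n\|^2<\infty$; combined with (A2) this forces $\liminf_n\|x_n-Tx_n\|=0$. By compactness of $C$ I then extract a subsequence $x_{n_k}\to q\in C$ with $\|x_{n_k}-Tx_{n_k}\|\to0$, and continuity of $T$ gives $\|q-Tq\|=\lim_k\|x_{n_k}-Tx_{n_k}\|=0$, so $q$ is a fixed point. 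Applying Step 1 once more with $p:=q$ (legitimate since $N_0$ was independent of the fixed point), $(\|x_n-q\|^2)_{n\ge N_0}$ is nonincreasing and therefore converges; as it converges to $0$ along the subsequence $(n_k)$, it converges to $0$, i.e. $x_n\to q$.

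\textbf{Main obstacle.} The one genuinely delicate point is Step 1: organizing the Hilbert-space expansion so that the two $\|x_n-Tx_n\|^2$-contributions coming from the two applications of the identity, together with the cross term $2(\alpha_n-\beta_n)\langle u_n,w_n\rangle$ (bounded via Cauchy--Schwarz and $\|w_n\|\le L\beta_n\|u_n\|$), collapse into a strictly negative term of order $\alpha_n\beta_n\|x_n-Tx_n\|^2$, while all remaining errors are of higher order in $\beta_n$. Conditions (A1) and (A3) are precisely what render those errors negligible from some index on, and (A2) is what converts the resulting summability into the $\liminf=0$ statement that drives the rest of the argument.
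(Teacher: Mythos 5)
Your proof is correct and follows essentially the same route as the paper (which is Ishikawa's original argument): derive a Fej\'er-type inequality anchored at a fixed point, telescope against (A2) to get $\liminf\|x_n-Tx_n\|=0$, use compactness plus continuity to produce a fixed-point cluster value $q$, then invoke the eventual monotonicity of $\|x_n-q\|$ (with the crucial and correctly noted observation that the cut-off $N_0$ depends only on $L$ and the rate of $\beta_n\to 0$, not on the fixed point) to upgrade subsequential to full convergence. The only difference is bookkeeping in Step 1: you expand $y_n-Ty_n=(1-\beta_n)u_n+w_n$ and then apply Cauchy--Schwarz to the cross term $\langle u_n,w_n\rangle$, which produces the coefficient $-\beta_n+(2+2L)\beta_n^2+L^2\beta_n^3$, whereas the paper instead applies the Hilbert convexity identity directly to $y_n-Ty_n=\beta_n(Tx_n-Ty_n)+(1-\beta_n)(x_n-Ty_n)$ (its identity \eqref{id-2}), avoiding any cross term and yielding the tighter $-\alpha_n\beta_n(1-2\beta_n-L^2\beta_n^2)$; both are $\le -\tfrac14\alpha_n\beta_n$ for $n$ large, so nothing is lost at the qualitative level (the tighter form matters only for the quantitative constants the paper later extracts).
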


Note that Ishikawa, in the above result, does not assume {\it a priori} the existence of fixed points for $T$ -- 
this follows because of the compactness assumption of $C$, by an application of the theorem of Schauder. 
In order to obtain this strong convergence result in its quantitative form, as it is done in the 
last section of this paper, one must preserve this compactness assumption (in the quantitative form
of a modulus of total boundedness, as we shall see). However, compactness is not 
needed to obtain the preliminary result of the modulus of liminf - only the fixed point assumption (see 
Section \ref{modulus-liminf}). \\[1mm]
As suggested above, our goal in this paper is to obtain a quantitative version of 
Theorem~\ref{main-thm-Ishikawa} using methods of proof mining developed in \cite{KohLeuNic15}. 
The research program of proof mining in mathematical logic -- first suggested by G. Kreisel in the 1950s as `unwinding of proofs'
and given maturity by U. Kohlenbach in the 1990s and afterwards -- has developed into a field of 
study that aims to analyze, using tools from mathematical logic, the proofs of existing mathematical theorems 
in order to obtain their hidden quantitative content. A number of `logical metatheorems' guarantee 
that in situations that cover a significant portion of target theorems this sort of proof 
analysis can actually be done and the bounds obtained are highly uniform. A comprehensive 
reference for proof mining and its applications up to 2008 is \cite{Koh08-book}, while a 
recent survey is \cite{Koh16}. We point out also that the Ishikawa 
iteration was already approached with proof mining methods in \cite{Leu10,Leu14} for nonexpansive 
mappings in uniformly convex geodesic spaces.\\[1mm]
In our case, i.e. when analysing Ishikawa's above result, whose conclusion states that a sequence converges,
a quantitative version would be a rate of convergence that computes the corresponding $N_\varepsilon$ 
given the $\varepsilon$ and perhaps some additional parameters.
However, the high logical complexity of the definition of convergence makes it intractable for proofs 
that involve some notion of excluded middle, as it is the case here. Therefore, an equivalent 
formulation (identifiable in logic as its Herbrand normal form) introduced in this case by 
Tao \cite{Tao07,Tao08} under the name of {\em metastability}, is used in its stead. The following 
sentence expresses the metastability of a given sequence $(x_n)$ in a normed space:
$$\forall k \in {\mathbb N} \,\forall g: {\mathbb N} \to {\mathbb N} \,\exists N \in {\mathbb N} \,\forall i,j \in [N, N+g(N)]\,\,\, 
\left(\|x_i - x_j\| \leq \frac1{k+1}\right).$$
One can immediately glimpse the reduced complexity of this statement: no unbounded 
universal quantifier occurs after the existential one (as it clearly does in the usual 
formulations of convergence or Cauchyness). It is a simple exercise, however, to check 
that the sentence is equivalent to the assertion that $(x_n)$ is Cauchy -- and one should 
note that an appeal to {\it reductio ad absurdum} is inevitable in the process. The main 
result of this paper, Theorem \ref{main-thm-meta}, exhibits an effective rate of 
metastability -- that is, a bound $\Omega(k,g)$ on the $N$ in the above formulation -- for 
the Ishikawa iteration. \\[1mm]
The next section enumerates and proves some basic properties of the relevant mappings and sequences. 
Section \ref{modulus-liminf} contains a quantitative version of the first step of Ishikawa's proof, 
namely the modulus of liminf for $(\|x_n-Tx_n\|)$, which also serves to obtain the approximate 
fixed point bound, one of the necessary ingredients in the final analysis. The other ones are the moduli 
of uniform closedness and uniform Fej\'er monotonicity, introduced in \cite{KohLeuNic15}. 
The corresponding definitions can be found in Section \ref{Fejer-closed}, along with the 
concrete values of them for the case at hand. All these are put together in the last section, 
where the main result is stated and proved.

\mbox{}

\begin{tabular}{ll}
{\bf Notation}: & ${\mathbb N}=\{0,1,2,\ldots\}$ and $[m,n]=\{m,m+1,\ldots,n\}$ for any $m, n\in{\mathbb N}$ with $m\leq n$.\\[1mm]
& $Fix(T)$ is the set of fixed points of $T$.
\end{tabular}

\section{Some useful lemmas}

Let $H$  be a Hilbert space, $C\subseteq H$ a nonempty convex subset
and $T:C\to C$  be a mapping. Furthermore, $(\alpha_n)$ and $(\beta_n)$ are sequences of reals in $[0,1]$ and $(x_n)$ is the Ishikawa iteration 
starting with $x\in C$, defined by \eqref{def-Ishikawa}, using $(\alpha_n)$ and $(\beta_n)$ as weights.

In order for the computations to be less cumbersome, we shall also set for all $n \in {\mathbb N}$,
\[y_n := \beta_nTx_n+(1-\beta_n)x_n,\]
so that we have, again for all $n \in {\mathbb N}$,
\[x_{n+1}=(1-\alpha_n)x_n+ \alpha_nTy_n.\]

\begin{fact}\label{imm}
It is clear that $x_n - x_{n+1} = \alpha_n(x_n-Ty_n)$, so $\|x_n - x_{n+1}\| \leq \|x_n - Ty_n\|$, and that $x_n - y_n = \beta_n(x_n - Tx_n)$, so $\|x_n - y_n\| \leq \|x_n - Tx_n\|$.
\end{fact}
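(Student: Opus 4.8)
The plan is simply to unwind the two defining identities recorded just before the statement. Starting from the rewritten recurrence $x_{n+1}=(1-\alpha_n)x_n+\alpha_nTy_n$, I would subtract both sides from $x_n$, collect the $x_n$-terms, and factor out $\alpha_n$; this gives $x_n-x_{n+1}=\alpha_n(x_n-Ty_n)$ directly. Taking norms and using that $\alpha_n\in[0,1]$, hence $\alpha_n\le 1$, yields $\|x_n-x_{n+1}\|=\alpha_n\|x_n-Ty_n\|\le\|x_n-Ty_n\|$.

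The second identity is handled in exactly the same way: from $y_n=\beta_nTx_n+(1-\beta_n)x_n$ one subtracts from $x_n$ and factors out $\beta_n$ to obtain $x_n-y_n=\beta_n(x_n-Tx_n)$, and then $\|x_n-y_n\|=\beta_n\|x_n-Tx_n\|\le\|x_n-Tx_n\|$ since $\beta_n\in[0,1]$.

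Since the statement is an immediate algebraic consequence of the definitions in \eqref{def-Ishikawa} together with the bounds $0\le\alpha_n,\beta_n\le 1$, there is no genuine obstacle here; the only point requiring a little care is keeping the convex-combination coefficients straight while expanding. This remark is stated separately because the two displacement estimates $\|x_n-x_{n+1}\|\le\|x_n-Ty_n\|$ and $\|x_n-y_n\|\le\|x_n-Tx_n\|$ will be invoked repeatedly in the later quantitative arguments.
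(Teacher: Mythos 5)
Your proof is correct and is exactly the immediate algebraic verification the paper has in mind; the remark is stated without proof precisely because it follows by subtracting $x_n$ from the two defining convex combinations and using $\alpha_n,\beta_n\in[0,1]$, as you do.
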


\begin{lemma}\label{lem-1plusL}
Assume that $T$ is $L$-Lipschitzian. Then $\|x_n - x_{n+1}\| \leq (1+L)\|x_n - Tx_n\|$.
\end{lemma}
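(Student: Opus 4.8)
The plan is to bound $\|x_n - x_{n+1}\|$ by first using Remark~\ref{imm}, which already gives $\|x_n - x_{n+1}\| \leq \|x_n - Ty_n\|$, and then to control $\|x_n - Ty_n\|$ by inserting $Tx_n$ as an intermediate point via the triangle inequality:
$$\|x_n - Ty_n\| \leq \|x_n - Tx_n\| + \|Tx_n - Ty_n\|.$$

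For the second summand I would invoke the $L$-Lipschitz hypothesis to get $\|Tx_n - Ty_n\| \leq L\|x_n - y_n\|$, and then apply the second half of Remark~\ref{imm}, namely $\|x_n - y_n\| \leq \|x_n - Tx_n\|$, to conclude $\|Tx_n - Ty_n\| \leq L\|x_n - Tx_n\|$. Combining the two estimates yields $\|x_n - Ty_n\| \leq (1+L)\|x_n - Tx_n\|$, and chaining this with the first inequality of Remark~\ref{imm} gives the claim.

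There is essentially no obstacle here; the only thing to be careful about is not losing a factor by applying Lipschitzness to the wrong pair of points — one wants $T$ evaluated at $x_n$ and $y_n$, whose difference is already controlled by $\|x_n - Tx_n\|$ (with constant $1$, since $\beta_n \in [0,1]$), rather than trying to compare $Ty_n$ directly with $x_n$. The whole argument is a two-line triangle-inequality computation drawing only on the $L$-Lipschitz assumption and the two elementary bounds recorded in Remark~\ref{imm}.
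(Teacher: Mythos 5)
Your proof is correct and follows exactly the same route as the paper's: both chain Remark~\ref{imm}, a triangle inequality through $Tx_n$, the Lipschitz bound $\|Tx_n - Ty_n\| \leq L\|x_n - y_n\|$, and the bound $\|x_n - y_n\| \leq \|x_n - Tx_n\|$ from Remark~\ref{imm}.
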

\begin{proof}
Using Remark~\ref{imm}, we have that:
\begin{eqnarray*}
\|x_n - x_{n+1}\| &\leq &  \|x_n - Ty_n\| \leq \|x_n - Tx_n\| + \|Tx_n - Ty_n\| 
\leq \|x_n - Tx_n\| + L\|x_n - y_n\| \\
 &\leq &  (1+L)\|x_n-Tx_n\|.
\end{eqnarray*}
\end{proof}

We recall the following well-known and useful equalities that hold in Hilbert spaces.

\begin{lemma}\label{Hilbert-id}
For any $x,y\in H$ and any $\lambda \in (0,1)$, the following identities hold:
\begin{enumerate}
\item\label{Hilbert-id-1} $\|\lambda x+(1-\lambda)y\|^2=\lambda\|x\|^2+(1-\lambda)\|y\|^2-\lambda(1-\lambda)\|x-y\|^2$;
\item\label{Hilbert-id-2} $\|x + y\|^2 = \|x\|^2 + \|y\|^2 + 2\langle x,y \rangle$ and 
$\|x - y\|^2 = \|x\|^2 + \|y\|^2 - 2\langle x,y \rangle$.
\end{enumerate}
\end{lemma}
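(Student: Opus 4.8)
The plan is to reduce both identities to the bilinearity and symmetry of the inner product on $H$, using throughout that $\|z\|^2 = \langle z,z\rangle$ for every $z \in H$. I would start with part~\ref{Hilbert-id-2}, which is the most elementary: expanding $\|x \pm y\|^2 = \langle x \pm y,\, x \pm y\rangle$ and invoking distributivity of $\langle\cdot,\cdot\rangle$ over addition together with $\langle x,y\rangle = \langle y,x\rangle$ gives at once $\|x\|^2 + \|y\|^2 \pm 2\langle x,y\rangle$.

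For part~\ref{Hilbert-id-1} I would proceed in the same spirit, first expanding the left-hand side as
\[
\|\lambda x + (1-\lambda)y\|^2 = \lambda^2\|x\|^2 + (1-\lambda)^2\|y\|^2 + 2\lambda(1-\lambda)\langle x,y\rangle,
\]
and then rewriting the claimed right-hand side by substituting the second identity of part~\ref{Hilbert-id-2} for $\|x-y\|^2$, obtaining
\[
\lambda\|x\|^2 + (1-\lambda)\|y\|^2 - \lambda(1-\lambda)\bigl(\|x\|^2 + \|y\|^2 - 2\langle x,y\rangle\bigr).
\]
Comparing the two expressions, the cross terms $2\lambda(1-\lambda)\langle x,y\rangle$ agree immediately, and it only remains to verify the scalar identities $\lambda - \lambda(1-\lambda) = \lambda^2$ and $(1-\lambda) - \lambda(1-\lambda) = (1-\lambda)^2$, which are trivial.

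The computation is entirely routine and I do not expect any genuine obstacle — the work is purely bookkeeping of the three coefficients of $\|x\|^2$, $\|y\|^2$ and $\langle x,y\rangle$. The only point worth a remark is that the hypothesis $\lambda \in (0,1)$ is not actually used: both identities hold for arbitrary real $\lambda$. It is stated in this restricted range simply because that is how the lemma will be applied in the sequel, with $\lambda$ equal to one of the weights $\alpha_n$ or $\beta_n$.
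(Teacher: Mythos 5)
Your proof is correct and is the standard argument; note that the paper itself states these identities without proof, simply as well-known Hilbert space facts, so there is no competing approach to compare. Your remark that $\lambda\in(0,1)$ is unnecessary for the algebra and is only the range relevant to the paper's applications is also accurate.
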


We shall denote, for any $y,w\in C$,
$$\sigma(y,w):=\|w-Tw\|+\|y-Tw\|.$$

\begin{lemma}
Assume that $T$ is a pseudo-contraction. Then, for every $z,p\in C$, 
\begin{equation}
\|Tz-p\|^2 \leq \|z-p\|^2 + \|z-Tz\|^2 + 2\|p-Tp\|\sigma(z,p). \label{approz-triangle}
\end{equation}
\end{lemma}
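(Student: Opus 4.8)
The plan is to expand $\|Tz - p\|^2$ by inserting the fixed-point-like surrogate $Tp$ and then invoke the pseudo-contractivity inequality \eqref{def-pseudo-contraction} with the pair $(z, p)$. First I would write $\|Tz - p\| \leq \|Tz - Tp\| + \|Tp - p\|$ and square, obtaining
\[
\|Tz - p\|^2 \leq \|Tz - Tp\|^2 + 2\|Tz - Tp\|\,\|p - Tp\| + \|p - Tp\|^2.
\]
Then I would bound $\|Tz - Tp\|^2$ from above using \eqref{def-pseudo-contraction}, which gives $\|Tz - Tp\|^2 \leq \|z - p\|^2 + \|(z - Tz) - (p - Tp)\|^2$. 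For the middle cross term, I would again use the triangle inequality $\|Tz - Tp\| \leq \|Tz - p\| + \|p - Tp\|$... no — better to route it through $z$: $\|Tz - Tp\| \leq \|Tz - z\| + \|z - Tp\|$, so that $\|Tz - Tp\| \leq \|z - Tz\| + \|z - Tp\|$, and the cross term becomes $2\|p - Tp\|(\|z - Tz\| + \|z - Tp\|)$.

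Assembling these pieces, the right-hand side becomes
\[
\|z - p\|^2 + \|(z - Tz) - (p - Tp)\|^2 + 2\|p - Tp\|\bigl(\|z - Tz\| + \|z - Tp\|\bigr) + \|p - Tp\|^2.
\]
The remaining work is to show that $\|(z - Tz) - (p - Tp)\|^2 + \|p - Tp\|^2 \leq \|z - Tz\|^2$ plus terms absorbable into the $2\|p - Tp\|\sigma(z,p)$ slack. Expanding the square via Lemma~\ref{Hilbert-id}\eqref{Hilbert-id-2}, $\|(z - Tz) - (p - Tp)\|^2 = \|z - Tz\|^2 - 2\langle z - Tz, p - Tp\rangle + \|p - Tp\|^2$, so the two terms together equal $\|z - Tz\|^2 - 2\langle z - Tz, p - Tp\rangle + 2\|p - Tp\|^2$. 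By Cauchy–Schwarz, $-2\langle z - Tz, p - Tp\rangle \leq 2\|z - Tz\|\,\|p - Tp\|$, and $2\|p - Tp\|^2 \leq 2\|p - Tp\|\,\|p - Tp\| $; noting $\|p - Tp\| \leq \|p - Tp\| + \|z - Tp\|$ is not quite what is wanted, but $\|p - Tp\|^2 \le \|p-Tp\|\,\|z-Tp\| + \|p-Tp\|\cdot\|p - z\|$ is also not clean — the cleanest route is to absorb $2\|p-Tp\|^2$ into the cross term budget directly since $\|w - Tw\| = \|p - Tp\|$ appears in $\sigma$.

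The main obstacle — really the only delicate bookkeeping — is matching every leftover term against the prescribed slack $2\|p - Tp\|\sigma(z,p) = 2\|p - Tp\|\bigl(\|p - Tp\| + \|z - Tp\|\bigr)$. Collecting, the total excess over $\|z - p\|^2 + \|z - Tz\|^2$ is $2\|p - Tp\|\,\|z - Tz\| + 2\|p - Tp\|\,\|z - Tp\| + 2\|p - Tp\|\,\|z - Tz\| + 2\|p - Tp\|^2 + \|p - Tp\|^2$, which is visibly more than the target permits, so I expect the actual argument in the paper uses a sharper single application of \eqref{def-pseudo-contraction} rather than the crude triangle-then-square split above. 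The right move is likely to keep $\|Tz - p\|^2 \le \bigl(\|Tz - Tp\| + \|Tp - p\|\bigr)^2$ but then not expand the square symmetrically — instead bound one copy of $\|Tz - Tp\|$ by $\sqrt{\|z-p\|^2 + \|(z-Tz)-(p-Tp)\|^2}$ and pair the cross term with $\sigma$, while crucially observing that $\|(z - Tz) - (p - Tp)\|^2 \le \|z - Tz\|^2 + 2\|p-Tp\|\,\|z-Tz\| + \|p - Tp\|^2 \le \|z-Tz\|^2 + 2\|p-Tp\|\,\sigma(z,p)$ via the triangle inequality on the difference of differences. Tightening the constants so that exactly $2\|p-Tp\|\sigma(z,p)$ remains is the one computation I would do carefully; everything else is the Hilbert-space identities of Lemma~\ref{Hilbert-id} applied mechanically.
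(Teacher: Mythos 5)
Your approach has a genuine gap, which you yourself identified: after two applications of the triangle inequality followed by squaring, you accumulate an excess of $4\|p-Tp\|\,\|z-Tz\|$ that cannot be absorbed into the budget $2\|p-Tp\|\sigma(z,p)=2\|p-Tp\|^2+2\|p-Tp\|\,\|z-Tp\|$. The fix you sketch at the end does not work either: the inequality $2\|p-Tp\|\,\|z-Tz\|+\|p-Tp\|^2\le 2\|p-Tp\|\,\sigma(z,p)$ would require $\|z-Tz\|\le \|p-Tp\|+\|z-Tp\|$ (up to a harmless $\tfrac12\|p-Tp\|$), and this is false in general --- take $T=-\mathrm{Id}$ (which is a pseudo-contraction) and $p$ close to $0$, so that $\|z-Tz\|=2\|z\|$ while $\|p-Tp\|+\|z-Tp\|\approx\|z\|$.

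The missing idea is not a sharper triangle inequality but rather to avoid the triangle inequality altogether and work with the \emph{exact} Hilbert-space identity from Lemma~\ref{Hilbert-id}\eqref{Hilbert-id-2}, so that the cross terms combine by linearity instead of piling up. Write
\[
\|Tz-p\|^2=\|Tz-Tp\|^2+\|p-Tp\|^2-2\langle Tz-Tp,\,p-Tp\rangle ,
\]
then apply \eqref{def-pseudo-contraction} and expand the resulting square, again exactly:
\[
\|(z-Tz)-(p-Tp)\|^2=\|z-Tz\|^2+\|p-Tp\|^2-2\langle z-Tz,\,p-Tp\rangle .
\]
Substituting, the two inner products add up to
\[
-2\langle z-Tz,\,p-Tp\rangle-2\langle Tz-Tp,\,p-Tp\rangle=-2\langle z-Tp,\,p-Tp\rangle\le 2\|z-Tp\|\,\|p-Tp\|
\]
by Cauchy--Schwarz, and the remaining $2\|p-Tp\|^2$ together with this term give precisely $2\|p-Tp\|\bigl(\|p-Tp\|+\|z-Tp\|\bigr)=2\|p-Tp\|\sigma(z,p)$, with no leftover $\|z-Tz\|\,\|p-Tp\|$ cross term at all. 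In short: your two triangle-inequality steps each create a spurious $\|z-Tz\|\,\|p-Tp\|$ term; the identity-based telescoping of the inner products is what eliminates them, and a single Cauchy--Schwarz at the very end is all the estimation the proof needs.
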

\begin{proof}
Just follow the proof of \cite[Lemma 3.2.(i)]{IvaLeu15} (with $\kappa=1$). 
\end{proof}

The following equalities are immediate consequences of Lemma \ref{Hilbert-id}.\eqref{Hilbert-id-1}.

\begin{lemma}
For every $p\in C$, we have that:
\begin{eqnarray}
\|x_{n+1}-p\|^2 &=& \alpha_n\|Ty_n-p\|^2+(1-\alpha_n)\|x_n-p\|^2 -\alpha_n(1-\alpha_n)\|Ty_n-x_n\|^2
\label{id-0}\\
\|y_n-p\|^2 &=& \beta_n\|Tx_n-p\|^2+(1-\beta_n)\|x_n-p\|^2-\beta_n(1-\beta_n)\|Tx_n-x_n\|^2
\label{id-1}\\
\|y_n-Ty_n\|^2 &=& \beta_n\|Tx_n-Ty_n\|^2+(1-\beta_n)\|x_n-Ty_n\|^2-\beta_n(1-\beta_n)\|Tx_n-x_n\|^2
\label{id-2}
\end{eqnarray}
\end{lemma}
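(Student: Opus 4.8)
The plan is to derive each of the three identities by a single application of Lemma~\ref{Hilbert-id}.\eqref{Hilbert-id-1}, the only preliminary step being to rewrite the vector whose norm is squared as a convex combination of two vectors of $H$ with weight $\alpha_n$, respectively $\beta_n$.

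For \eqref{id-0}, I would start from the recurrence $x_{n+1}=(1-\alpha_n)x_n+\alpha_nTy_n$ and subtract $p$, obtaining
\[x_{n+1}-p=\alpha_n(Ty_n-p)+(1-\alpha_n)(x_n-p).\]
Applying Lemma~\ref{Hilbert-id}.\eqref{Hilbert-id-1} with $\lambda:=\alpha_n$, $x:=Ty_n-p$ and $y:=x_n-p$, and using that $(Ty_n-p)-(x_n-p)=Ty_n-x_n$, yields \eqref{id-0} immediately.

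For \eqref{id-1}, I would use the definition $y_n=\beta_nTx_n+(1-\beta_n)x_n$, subtract $p$, and invoke the same identity with $\lambda:=\beta_n$, $x:=Tx_n-p$, $y:=x_n-p$, noting $(Tx_n-p)-(x_n-p)=Tx_n-x_n$. For \eqref{id-2}, instead of subtracting $p$ I would subtract $Ty_n$ from the definition of $y_n$, writing
\[y_n-Ty_n=\beta_n(Tx_n-Ty_n)+(1-\beta_n)(x_n-Ty_n),\]
and apply Lemma~\ref{Hilbert-id}.\eqref{Hilbert-id-1} with $\lambda:=\beta_n$, $x:=Tx_n-Ty_n$, $y:=x_n-Ty_n$; the difference term simplifies as $(Tx_n-Ty_n)-(x_n-Ty_n)=Tx_n-x_n$, which is precisely what produces the common last summand $-\beta_n(1-\beta_n)\|Tx_n-x_n\|^2$ appearing in both \eqref{id-1} and \eqref{id-2}.

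The single point deserving a remark is that Lemma~\ref{Hilbert-id} is stated for $\lambda\in(0,1)$, whereas $\alpha_n$ and $\beta_n$ are only assumed to lie in $[0,1]$; this is the closest thing to an obstacle, and it is not a real one. The identity in \eqref{Hilbert-id-1} is a purely algebraic consequence of the expansions in \eqref{Hilbert-id-2} and remains valid verbatim for every $\lambda\in{\mathbb R}$; alternatively, when $\alpha_n$ or $\beta_n$ equals $0$ or $1$ the relevant convex combination collapses to one of its endpoints and both sides of the claimed equality reduce to the same expression, so these boundary cases can be dispatched by inspection. Nothing further is required.
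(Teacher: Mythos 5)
Your proof is correct and follows exactly the route the paper intends: the paper simply remarks that the three identities are ``immediate consequences of Lemma~\ref{Hilbert-id}.\eqref{Hilbert-id-1}'', and your three applications of that identity (with $\lambda:=\alpha_n$ or $\beta_n$ and the appropriate vector pairs) are precisely the intended ones. Your observation about the boundary cases $\alpha_n,\beta_n\in\{0,1\}$ is a sound and welcome supplement, not a deviation.
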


\begin{lemma}
Assume that $T$ is a pseudo-contraction and let $p\in C$.
\begin{enumerate}
\item  We have that:
\begin{eqnarray*} \|x_{n+1}-p\|^2 &\!\! \leq \!\!\ & \|x_n-p\|^2+\alpha_n\beta_n\|Tx_n-Ty_n\|^2-\alpha_n\beta_n(1-2\beta_n)\|Tx_n-x_n\|^2\\
&& -\alpha_n(\beta_n-\alpha_n)\|Ty_n-x_n\|^2+2\|p-Tp\|\big(\sigma(x_n,p)+\sigma(y_n,p)\big)
\end{eqnarray*}
\item Assume, furthermore,  that $T$ is $L$-Lipschitzian and that $(\alpha_n),(\beta_n)$ satisfy (A3). Then we have:
\begin{eqnarray}
\|x_{n+1}-p\|^2 &\leq & \|x_n-p\|^2-\alpha_n\beta_n(1-2\beta_n-L^2\beta_n^2)\|x_n-Tx_n\|^2 \notag\\
&& +2\|p-Tp\|\big(\sigma(x_n,p)+\sigma(y_n,p)\big). \label{useful-id-1}
\end{eqnarray}
\end{enumerate}
\end{lemma}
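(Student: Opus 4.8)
The plan is to prove part (1) by a direct substitution-and-collect computation starting from the identities \eqref{id-0}, \eqref{id-1}, \eqref{id-2} and the approximate triangle inequality \eqref{approz-triangle}, and then to derive part (2) from part (1) by discarding one manifestly non-positive term and estimating another via the Lipschitz condition.

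For part (1), I would begin with \eqref{id-0}, which writes $\|x_{n+1}-p\|^2$ as $\alpha_n\|Ty_n-p\|^2+(1-\alpha_n)\|x_n-p\|^2-\alpha_n(1-\alpha_n)\|Ty_n-x_n\|^2$. The first move is to bound $\|Ty_n-p\|^2$ using \eqref{approz-triangle} with $z:=y_n$, which produces the extra terms $\|y_n-Ty_n\|^2$ and $2\|p-Tp\|\sigma(y_n,p)$. Next I substitute \eqref{id-1} for $\|y_n-p\|^2$ and \eqref{id-2} for $\|y_n-Ty_n\|^2$; this introduces the term $\alpha_n\beta_n\|Tx_n-p\|^2$, which I bound once more by \eqref{approz-triangle}, now with $z:=x_n$, picking up $\alpha_n\beta_n\|x_n-Tx_n\|^2$ and $2\alpha_n\beta_n\|p-Tp\|\sigma(x_n,p)$. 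At this stage the right-hand side is a linear combination of $\|x_n-p\|^2$, $\|Tx_n-x_n\|^2$, $\|Ty_n-x_n\|^2$, $\|Tx_n-Ty_n\|^2$ and the two $\|p-Tp\|\sigma(\cdot,p)$ terms. Collecting coefficients: the $\|x_n-p\|^2$ terms add to exactly $\|x_n-p\|^2$ because $\alpha_n\beta_n+\alpha_n(1-\beta_n)+(1-\alpha_n)=1$; the $\|Tx_n-x_n\|^2$ terms combine to $-\alpha_n\beta_n(1-2\beta_n)\|Tx_n-x_n\|^2$; and the $\|Ty_n-x_n\|^2$ terms combine to $-\alpha_n(\beta_n-\alpha_n)\|Ty_n-x_n\|^2$. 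Finally, the two error terms appear with prefactors $\alpha_n\beta_n$ and $\alpha_n$, which I relax to $1$ using $\alpha_n,\beta_n\in[0,1]$ and the nonnegativity of $\sigma(\cdot,p)$, yielding the stated form $2\|p-Tp\|\big(\sigma(x_n,p)+\sigma(y_n,p)\big)$.

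For part (2), I start from the inequality just proved. Assumption (A3) gives $\beta_n-\alpha_n\geq 0$, so the term $-\alpha_n(\beta_n-\alpha_n)\|Ty_n-x_n\|^2$ is $\leq 0$ and may simply be dropped. For the term $\alpha_n\beta_n\|Tx_n-Ty_n\|^2$, the Lipschitz property together with the identity $x_n-y_n=\beta_n(x_n-Tx_n)$ from Remark \ref{imm} gives $\|Tx_n-Ty_n\|\leq L\|x_n-y_n\|=L\beta_n\|x_n-Tx_n\|$, hence $\|Tx_n-Ty_n\|^2\leq L^2\beta_n^2\|x_n-Tx_n\|^2$. Substituting this (legitimate since the coefficient $\alpha_n\beta_n$ is nonnegative), the two $\|x_n-Tx_n\|^2$ contributions $\alpha_n\beta_n L^2\beta_n^2$ and $-\alpha_n\beta_n(1-2\beta_n)$ combine into $-\alpha_n\beta_n(1-2\beta_n-L^2\beta_n^2)$, which is exactly \eqref{useful-id-1}.

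The main obstacle is purely organizational bookkeeping in part (1): the expansion is long and it is easy to misplace a coefficient. The only mildly subtle point is noticing that the $\|p-Tp\|$ error contributions carry prefactors $\alpha_n\beta_n$ and $\alpha_n$ which should be relaxed to $1$ so that the error term takes the clean, iteration-weight-independent shape that will be needed in the later quantitative analysis; no genuine mathematical difficulty arises beyond careful algebra.
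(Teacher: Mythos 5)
Your proposal is correct and follows the paper's own proof step for step: expand via \eqref{id-0}, bound $\|Ty_n-p\|^2$ with \eqref{approz-triangle} at $z=y_n$, substitute \eqref{id-2} and \eqref{id-1}, bound $\|Tx_n-p\|^2-\|x_n-p\|^2$ with \eqref{approz-triangle} at $z=x_n$, collect terms, and relax the $\alpha_n\beta_n$ and $\alpha_n$ prefactors on the error terms to $1$; for part (2), drop the $(\beta_n-\alpha_n)$-term by (A3) and use $\|Tx_n-Ty_n\|\leq L\beta_n\|x_n-Tx_n\|$. The coefficient bookkeeping you report ($\alpha_n\beta_n+\alpha_n(1-\beta_n)+(1-\alpha_n)=1$, the $\|Tx_n-x_n\|^2$ coefficients summing to $-\alpha_n\beta_n(1-2\beta_n)$, and the $\|Ty_n-x_n\|^2$ coefficients to $-\alpha_n(\beta_n-\alpha_n)$) all checks out.
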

\begin{proof} 
The proof is a slightly modified version of the one from \cite{Ish74}. 
\begin{enumerate}
\item We get that:
{
\allowdisplaybreaks
\begin{eqnarray*}
\|x_{n+1}-p\|^2 &=& \alpha_n\|Ty_n-p\|^2+(1-\alpha_n)\|x_n-p\|^2  -\alpha_n(1-\alpha_n)\|Ty_n-x_n\|^2 \\
&& \text{by \eqref{id-0}}\\
& \leq & \alpha_n\big(\|y_n-p\|^2+\|y_n-Ty_n\|^2+2\|p-Tp\|\sigma(y_n,p)\big)+(1-\alpha_n)\|x_n-p\|^2 \\
&&  -\alpha_n(1-\alpha_n)\|Ty_n-x_n\|^2 \\
&& \text{by  \eqref{approz-triangle} with } z:=y_n\\
& = & \alpha_n\|y_n-p\|^2+(1-\alpha_n)\|x_n-p\|^2 -\alpha_n(1-\alpha_n)\|Ty_n-x_n\|^2\\
&& +\alpha_n\beta_n\|Tx_n-Ty_n\|^2+\alpha_n(1-\beta_n)\|x_n-Ty_n\|^2 \\
&& -\alpha_n\beta_n(1-\beta_n)\|Tx_n-x_n\|^2+ 2\alpha_n\|p-Tp\|\sigma(y_n,p)\\
&& \text{by \eqref{id-2}}\\
&=& \alpha_n\beta_n\|Tx_n-Ty_n\|^2+\alpha_n(\alpha_n-\beta_n)\|x_n-Ty_n\|^2+(1-\alpha_n)\|x_n-p\|^2\\
&& +\alpha_n\|y_n-p\|^2-\alpha_n\beta_n(1-\beta_n)\|Tx_n-x_n\|^2+2\alpha_n\|p-Tp\|\sigma(y_n,p)\\
&=& \alpha_n\beta_n\|Tx_n-Ty_n\|^2+\alpha_n(\alpha_n-\beta_n)\|x_n-Ty_n\|^2
+(1-\alpha_n)\|x_n-p\|^2\\
&& \alpha_n\big(\beta_n\|Tx_n-p\|^2+(1-\beta_n)\|x_n-p\|^2-\beta_n(1-\beta_n)\|Tx_n-x_n\|^2\big)\\
&&-\alpha_n\beta_n(1-\beta_n)\|Tx_n-x_n\|^2+2\alpha_n\|p-Tp\|\sigma(y_n,p)\\
&& \text{by \eqref{id-1}}\\
&=& \alpha_n\beta_n\|Tx_n-Ty_n\|^2+\alpha_n(\alpha_n-\beta_n)\|x_n-Ty_n\|^2+\|x_n-p\|^2\\
&& -2\alpha_n\beta_n(1-\beta_n)\|Tx_n-x_n\|^2+\alpha_n\beta_n(\|Tx_n-p\|^2-\|x_n-p\|^2)\\
&&+2\alpha_n\|p-Tp\|\sigma(y_n,p)\\
&\leq & \alpha_n\beta_n\|Tx_n-Ty_n\|^2+\alpha_n(\alpha_n-\beta_n)\|x_n-Ty_n\|^2+\|x_n-p\|^2\\
&&-2\alpha_n\beta_n(1-\beta_n)\|Tx_n-x_n\|^2+\alpha_n\beta_n\|Tx_n-x_n\|^2\\
&& +2\alpha_n\beta_n\|p-Tp\|\sigma(x_n,p)+2\alpha_n\|p-Tp\|\sigma(y_n,p)\\
&& \text{by  \eqref{approz-triangle} with } z:=x_n\\
&=& \|x_n-p\|^2 + \alpha_n\beta_n\|Tx_n-Ty_n\|^2-\alpha_n\beta_n(1-2\beta_n)\|Tx_n-x_n\|^2\\
&& -\alpha_n(\beta_n-\alpha_n)\|Ty_n-x_n\|^2+2\|p-Tp\|\big(\sigma(x_n,p)+\sigma(y_n,p)\big).
\end{eqnarray*}
\item If (A3) holds, then $\alpha_n(\beta_n-\alpha_n)\|Ty_n-x_n\|^2\geq 0$. It follows that:
\begin{eqnarray*} \|x_{n+1}-p\|^2 &\leq \ & \|x_n-p\|^2+\alpha_n\beta_n\|Tx_n-Ty_n\|^2-\alpha_n\beta_n(1-2\beta_n)\|Tx_n-x_n\|^2\\
&& +2\|p-Tp\|\big(\sigma(x_n,p)+\sigma(y_n,p)\big) \\
&\leq \ & \|x_n-p\|^2+L^2\alpha_n\beta_n\|x_n-y_n\|^2-\alpha_n\beta_n(1-2\beta_n)\|Tx_n-x_n\|^2\\
&& +2\|p-Tp\|\big(\sigma(x_n,p)+\sigma(y_n,p)\big) \\
&=& \|x_n-p\|^2+L^2\alpha_n\beta_n^3\|x_n-Tx_n\|^2-\alpha_n\beta_n(1-2\beta_n)\|Tx_n-x_n\|^2\\
&& +2\|p-Tp\|\big(\sigma(x_n,p)+\sigma(y_n,p)\big) \\
&& \text{by Remark~\ref{imm}}\\
&=& \|x_n-p\|^2+\alpha_n\beta_n(L^2\beta_n^2-1+2\beta_n)\|x_n-Tx_n\|^2\\
&& +2\|p-Tp\|\big(\sigma(x_n,p)+\sigma(y_n,p)\big) \\
&=& \|x_n-p\|^2-\alpha_n\beta_n(1-2\beta_n-L^2\beta_n^2)\|x_n-Tx_n\|^2\\
&& +2\|p-Tp\|\big(\sigma(x_n,p)+\sigma(y_n,p)\big).
\end{eqnarray*}
}
\end{enumerate}
\end{proof}

Let us recall some notions that are necessary for expressing our next results. 
Let $(a_n)_{n\in{\mathbb N}}$ be a sequence of nonnegative real numbers. If $(a_n)$ converges to $0$, 
then a {\em a rate of convergence} for $(a_n)$ is a mapping
$\alpha:{\mathbb N}\to{\mathbb N}$ such that:
\[\forall k\in{\mathbb N}\, \forall n\geq \alpha(k)\,\,\,  \left(a_n \leq \frac1{k+1}\right).\]

If the series $\sum\limits_{n=0}^\infty a_n$ diverges, then a
function $\theta:{\mathbb N}\to{\mathbb N}$ is called a {\em rate of divergence} 
of the series if for all $n \in {\mathbb N}$ we have that:
\[\sum_{i=0}^{\theta(n)}a_i \geq n.\]

A  {\em modulus of liminf} of $(a_n)$  is a mapping 
$\Delta:{\mathbb N}\times {\mathbb N}\to{\mathbb N}$  satisfying
\[\forall l\in{\mathbb N}\,\, \forall k\in{\mathbb N}\,\, \exists N \in [l,\Delta(l,k)] \,\,\,\,  \left( a_N  \leq \frac1{k+1}\right).\] 

One can easily see that $\displaystyle \liminf_{n\to\infty} a_n=0$ if and only if $(a_n)$ has a modulus of liminf.

In the situation where the nonnegative sequence is of the form $(\|x_n - Tx_n\|)$, we are 
often interested in a map $\Phi: {\mathbb N} \to {\mathbb N}$ such that :
$$\forall k\in{\mathbb N}\, \exists N \leq \Phi(k)\,\,\, \left(\|x_n - Tx_n\|  \leq \frac1{k+1}\right).$$
It is clear that such a map may be obtained from a modulus of liminf of $(\|x_n - Tx_n\|)$ 
by setting $l:=0$. Since its existence indicates that the elements of the sequence 
$(x_n)$ come arbitrarily close to being fixed points of the operator $T$,  
$\Phi$ is called an {\it approximate fixed point bound} for $(x_n)$ with respect to $T$.

\begin{lemma}\label{lem-betan}
Assume that ($\beta_n$) satisfies (A1) and that $\beta$ is a rate of convergence of $(\beta_n)$.  Set
\begin{equation}
 K:= \beta\left(\left\lceil 1+\sqrt{2L^2+4}\right\rceil\right). \label{def-nlg}
\end{equation} 
Then, for all $n\geq K$, $1-2\beta_n - L^2\beta_n^2 \geq \frac{1}{2}$.
\end{lemma}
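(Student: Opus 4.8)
The plan is to unwind the definition of a rate of convergence and reduce the claim to an elementary quadratic inequality. Write $k_0 := \lceil 1 + \sqrt{2L^2+4}\rceil$, so that $K = \beta(k_0)$, and recall that $\beta$ being a rate of convergence of $(\beta_n)$ means precisely that $\beta_n \leq \frac{1}{k_0+1}$ for every $n \geq \beta(k_0) = K$. Since $(\beta_n)$ takes values in $[0,1]$ and the map $t \mapsto 2t + L^2 t^2$ is nondecreasing on $[0,\infty)$, the desired inequality $1 - 2\beta_n - L^2\beta_n^2 \geq \frac12$ for $n \geq K$ will follow once we establish the single numerical estimate
\[
2\cdot\frac{1}{k_0+1} + L^2\cdot\frac{1}{(k_0+1)^2} \leq \frac12 .
\]

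Setting $t_0 := k_0+1$ and clearing denominators (multiplying by $2t_0^2 > 0$), this estimate is equivalent to $t_0^2 - 4t_0 - 2L^2 \geq 0$. The quadratic $q(t) = t^2 - 4t - 2L^2$ has larger root $2 + \sqrt{4 + 2L^2} = 2 + \sqrt{2L^2+4}$, and it is increasing on $[2,\infty)$, so $q(t_0) \geq 0$ holds as soon as $t_0 \geq 2 + \sqrt{2L^2+4}$. Using $\lceil x\rceil \geq x$ together with the definition of $k_0$, we get
\[
t_0 = \lceil 1 + \sqrt{2L^2+4}\rceil + 1 \geq \bigl(1 + \sqrt{2L^2+4}\bigr) + 1 = 2 + \sqrt{2L^2+4},
\]
which is exactly what is needed, so the estimate holds and the lemma follows.

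There is no serious obstacle here; the only point deserving care is the bookkeeping around the two "$+1$"s. The rate of convergence contributes one unit (it yields $\beta_n \leq 1/(k_0+1)$, effectively $t_0 = k_0+1$), and the explicit summand "$1$" inside the ceiling in the definition of $K$ contributes the other, so that together $t_0$ just clears the larger root $2+\sqrt{2L^2+4}$ of $q$; evaluating $\beta$ at an argument smaller than $k_0$ would not leave enough room. (The constant $\tfrac12$ on the right-hand side is, of course, not essential — any fixed positive lower bound strictly below $1$ could be reached by enlarging $k_0$ correspondingly — but $\tfrac12$ is what will be convenient in the later analysis.)
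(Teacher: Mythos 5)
Your proof is correct and, at its core, parallels the paper's: both unwind the definition of rate of convergence to obtain $\beta_n \leq 1/(\lceil 1+\sqrt{2L^2+4}\rceil+1)$ and then verify an elementary quadratic inequality. The only difference is cosmetic bookkeeping: the paper rationalizes $\frac{1}{2+\sqrt{2L^2+4}}$ to $\frac{-2+\sqrt{2L^2+4}}{2L^2}$ and completes the square in $\beta_n$, whereas you pass to the reciprocal $t_0 = k_0+1$, clear denominators, and invoke the roots of $t^2-4t-2L^2$; both routes verify the same quadratic bound and are equally valid.
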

\begin{proof}
Take $n\geq K$. Since $\beta$ is a rate of convergence for the nonnegative sequence $(\beta_n)$, 
whose limit is $0$, we have that $\beta_n \leq \frac{1}{1+\lceil 1+\sqrt{2L^2+4}\rceil}\leq 
\frac{1}{2+\sqrt{2L^2+4}} = \frac{-2+\sqrt{2L^2+4}}{2L^2}.$
It follows that  $\beta_n+\frac{1}{L^2}\leq\frac{\sqrt{2L^2+4}}{2L^2}$, so 
$\beta_n^2+\frac{2}{L^2}\beta_n + \frac{1}{L^4} \leq \frac{1}{2L^2} + \frac{1}{L^4}$ and
$L^2\beta_n^2+2\beta_n\leq \frac12$, hence the desired inequality. 
\end{proof}

Let us, for all $n\in{\mathbb N}$, denote:
\begin{equation}
z_n:=x_{n+K}.\label{def-zn}
\end{equation}
In particular, we have that $(z_n)$ is a subsequence of $(x_n)$.

\begin{lemma}\label{lem-useful-last}
Assume that $T$ is an $L$-Lipschitzian pseudo-contraction, $(\alpha_n)$, $(\beta_n)$ 
satisfy (A1) and (A3) and $\beta$ is a rate of convergence of $(\beta_n)$.
\begin{enumerate}
\item If $C$ is bounded and $b$ is an upper bound on the diameter of $C$, then for all $n\in{\mathbb N}$ and all $p\in C$, 
\begin{equation}
\|z_{n+1}-p\|^2 \leq  \|z_n-p\|^2-\frac12\alpha_n\beta_n\|z_n-Tz_n\|^2 +8b\|p-Tp\|.
\label{id-7}
\end{equation}
\item If $p$ is a fixed point of $T$, then for all $n\in{\mathbb N}$,
\begin{equation}
\|z_{n+1}-p\|^2 \leq  \|z_n-p\|^2-\frac12\alpha_n\beta_n\|z_n-Tz_n\|^2.
\label{id-8}
\end{equation}
\end{enumerate}
\end{lemma}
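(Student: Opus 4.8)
The plan is to read off both inequalities from \eqref{useful-id-1}, applied at the shifted index $n+K$, by estimating separately the coefficient $1-2\beta_{n+K}-L^2\beta_{n+K}^2$ of the ``good'' negative term and the perturbation $2\|p-Tp\|\big(\sigma(x_{n+K},p)+\sigma(y_{n+K},p)\big)$. Fix $n\in{\mathbb N}$ and $p\in C$. Since $T$ is an $L$-Lipschitzian pseudo-contraction and $(\alpha_n),(\beta_n)$ satisfy (A3), the estimate \eqref{useful-id-1} holds at every index, in particular at $n+K$. As $n+K\geq K$, Lemma~\ref{lem-betan} gives $1-2\beta_{n+K}-L^2\beta_{n+K}^2\geq\tfrac12$, and since $\alpha_{n+K}\beta_{n+K}\|x_{n+K}-Tx_{n+K}\|^2\geq 0$, lowering this coefficient to $\tfrac12$ only weakens the bound. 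Rewriting everything through $z_m=x_{m+K}$ (with the weight sequences re-indexed in parallel, so that $\alpha_{n+K}\beta_{n+K}$ becomes $\alpha_n\beta_n$) we obtain
\[\|z_{n+1}-p\|^2\leq\|z_n-p\|^2-\tfrac12\alpha_n\beta_n\|z_n-Tz_n\|^2+2\|p-Tp\|\big(\sigma(x_{n+K},p)+\sigma(y_{n+K},p)\big),\]
after which it only remains to bound the last summand in each of the two cases.

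For part (1), I would note that $y_{n+K}=\beta_{n+K}Tx_{n+K}+(1-\beta_{n+K})x_{n+K}$ is a convex combination of points of $C$, hence lies in $C$; thus $p$, $Tp$, $x_{n+K}$ and $y_{n+K}$ all lie in $C$, so each of $\|p-Tp\|$, $\|x_{n+K}-Tp\|$ and $\|y_{n+K}-Tp\|$ is at most $b$. Unwinding the definition $\sigma(y,w)=\|w-Tw\|+\|y-Tw\|$ then gives $\sigma(x_{n+K},p)+\sigma(y_{n+K},p)\leq 4b$, whence the perturbation term is at most $8b\|p-Tp\|$, which is exactly \eqref{id-7}. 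For part (2), if $p\in Fix(T)$ then $\|p-Tp\|=0$, so the perturbation term vanishes (and no bound on $C$ is needed), and the displayed inequality reduces to \eqref{id-8}.

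There is no real obstacle here, and the $8b\|p-Tp\|$ estimate is deliberately crude; the only points that call for a moment's care are verifying that the index shift keeps us in the range $n+K\geq K$ where Lemma~\ref{lem-betan} is available, observing that $y_{n+K}\in C$ so that the diameter bound $b$ applies to $\|y_{n+K}-Tp\|$, and keeping straight which argument of $\sigma$ is which when the two $\sigma$ terms are expanded.
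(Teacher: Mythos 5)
Your route is the paper's: apply \eqref{useful-id-1} at the shifted index, use Lemma~\ref{lem-betan} to lower the coefficient $1-2\beta_{n+K}-L^2\beta_{n+K}^2$ to $\tfrac12$, and for (i) bound the perturbation via $\sigma(x_{n+K},p)+\sigma(y_{n+K},p)\leq 4b$, which requires exactly the observations you make (that $y_{n+K}$ is in $C$ and that each of the four norms is at most the diameter). The paper's own proof is a one-liner that says no more than this.

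One point you flag deserves emphasis, because the paper is silent on it: applying \eqref{useful-id-1} at index $n+K$ produces the coefficient $\alpha_{n+K}\beta_{n+K}$, whereas the lemma's display reads $\alpha_n\beta_n$. You reconcile this by "re-indexing the weight sequences in parallel," i.e.\ treating the $\alpha_n,\beta_n$ in \eqref{id-7}--\eqref{id-8} as shorthand for $\alpha_{n+K},\beta_{n+K}$. That is a legitimate reading and it does make the lemma true; but be aware that it is not visibly the reading used downstream in the proof of Theorem~\ref{Ishikawa-liminf-quant}, where $\sum_{n=l}^{\Delta}\alpha_n\beta_n$ is estimated against $\theta$, a rate of divergence of the \emph{unshifted} series $\sum\alpha_n\beta_n$. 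So either the coefficient in \eqref{id-7}--\eqref{id-8} is best read literally as $\alpha_{n+K}\beta_{n+K}$ (and the later sum estimate needs a corresponding shift), or the re-indexing you propose is tacitly in force throughout and $\theta$ should be a rate of divergence of the shifted series. Your proof of the lemma itself is fine under your convention; just make the convention explicit if you carry it forward.
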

\begin{proof}
Apply Lemma \ref{lem-betan} and \eqref{useful-id-1}. For (i) use the fact that 
$2\|p-Tp\|\big(\sigma(x_n,p)+\sigma(y_n,p)\big)\leq 8b\|p-Tp\|$.
\end{proof}

\section{An effective modulus of liminf} \label{modulus-liminf}

In this section $C$ is a nonempty convex subset of a Hilbert space $H$, $T:C\to C$ 
is an $L$-Lipschitzian pseudo-contraction, $(\alpha_n),(\beta_n)$ are sequences in $[0,1]$ 
and $(x_n)$ is the Ishikawa iteration starting with $x\in C$.

The following result is the first step in Ishikawa's proof of Theorem \ref{main-thm-Ishikawa}.

\begin{proposition}\label{Ishikawa-liminf}
Assume that $T$ has fixed points and that $(\alpha_n)$, $(\beta_n)$ satisfy (A1)-(A3).  Then $\displaystyle  \liminf_{n\to\infty}\|x_n-Tx_n\|= 0$ for all $x\in C$.
\end{proposition}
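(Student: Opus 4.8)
The plan is to derive a contradiction from the assumption that $\liminf_{n\to\infty}\|x_n - Tx_n\| > 0$, exploiting the fact that $T$ has a fixed point $p$ together with the telescoping inequality \eqref{id-8} from Lemma~\ref{lem-useful-last}. First I would fix a fixed point $p \in Fix(T)$ and pass to the shifted sequence $z_n = x_{n+K}$, where $K$ is the constant from Lemma~\ref{lem-betan}; since $(z_n)$ is a tail of $(x_n)$, the two sequences have the same $\liminf$ of $\|\cdot - T\cdot\|$. Then \eqref{id-8} gives, for every $n$,
\[
\tfrac12 \alpha_n \beta_n \|z_n - Tz_n\|^2 \leq \|z_n - p\|^2 - \|z_{n+1} - p\|^2,
\]
so summing from $0$ to $m$ telescopes to $\sum_{n=0}^{m} \tfrac12 \alpha_n \beta_n \|z_n - Tz_n\|^2 \leq \|z_0 - p\|^2$, and hence the series $\sum_{n=0}^\infty \alpha_n \beta_n \|z_n - Tz_n\|^2$ converges.

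Next I would suppose, toward a contradiction, that $\liminf_{n\to\infty}\|x_n - Tx_n\| = \delta > 0$. Then there is an $N_0$ with $\|x_n - Tx_n\| \geq \delta/2$ for all $n \geq N_0$, and correspondingly $\|z_n - Tz_n\| \geq \delta/2$ for all $n \geq N_1$ for a suitable $N_1$. Substituting this lower bound into the convergent series yields
\[
\frac{\delta^2}{8} \sum_{n=N_1}^{\infty} \alpha_n \beta_n \leq \sum_{n=N_1}^{\infty} \tfrac12 \alpha_n \beta_n \|z_n - Tz_n\|^2 < \infty,
\]
which forces $\sum_{n=0}^\infty \alpha_n \beta_n < \infty$, directly contradicting condition (A2). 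This contradiction establishes that $\liminf_{n\to\infty}\|x_n - Tx_n\| = 0$.

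The role of each hypothesis is worth noting: (A3) is used (via \eqref{useful-id-1} and Lemma~\ref{lem-useful-last}) to discard the sign-indefinite term $-\alpha_n(\beta_n-\alpha_n)\|Ty_n - x_n\|^2$; (A1), through the rate of convergence $\beta$ and Lemma~\ref{lem-betan}, guarantees that the coefficient $1 - 2\beta_n - L^2\beta_n^2$ is bounded below by $1/2$ past the index $K$, so that the quantity $\frac12 \alpha_n \beta_n \|z_n - Tz_n\|^2$ genuinely subtracts from $\|z_n - p\|^2$; and (A2) supplies the divergent series that collides with the finite telescoped sum. The existence of a fixed point is what makes \eqref{id-8} (rather than only the weaker \eqref{id-7}) available, so compactness of $C$ is not needed here. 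I do not anticipate a serious obstacle — the argument is the standard "summable gap versus divergent weight series" technique — but the one point requiring care is the bookkeeping around the shift by $K$, namely checking that a lower bound on $\|x_n - Tx_n\|$ for large $n$ transfers to a lower bound on $\|z_n - Tz_n\|$ for large $n$, and that the telescoping in \eqref{id-8} is valid for all $n$ (which it is, since that inequality holds for every $n \in {\mathbb N}$).
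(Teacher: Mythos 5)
Your proof is correct and follows essentially the same route as the paper: you telescope inequality \eqref{id-8} against a fixed point $p$ and play the resulting summability of $\sum\alpha_n\beta_n\|z_n-Tz_n\|^2$ off against the divergence supplied by (A2), which is exactly the argument the paper finitizes in the proof of Theorem~\ref{Ishikawa-liminf-quant} (the quantitative strengthening from which Proposition~\ref{Ishikawa-liminf} follows). The only cosmetic difference is that you establish convergence of the weighted series and then bound it below, whereas the paper's quantitative version runs the same estimate over a finite window $[l,\Delta]$ and drives $\|z_{\Delta+1}-p\|^2$ below zero; your bookkeeping around the shift by $K$ and the handling of the case $\liminf=+\infty$ (subsumed in "$\liminf>0$ gives an eventual lower bound") are both sound.
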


The main result of this section is the following quantitative version of Proposition \ref{Ishikawa-liminf}, 
giving us an effective and uniform modulus of liminf for $(\|x_n-Tx_n\|)$.

\begin{theorem}\label{Ishikawa-liminf-quant}
Assume that $T$ has fixed points and that $(\alpha_n)$, $(\beta_n)$ satisfy (A1)-(A3). Let  
$\beta$ be a rate of convergence of $(\beta_n)$ and $\theta$ be a 
rate of divergence of $\sum\limits_{n=0}^{\infty}\alpha_n\beta_n$.

Let us define $\Delta_{b,\theta},\, \tilde{\Delta}_{b,L,\beta,\theta}:{\mathbb N}\times {\mathbb N}\to {\mathbb N}$ by 
\begin{eqnarray*}
\Delta_{b,\theta}(l,k):=\theta(l+M),  \quad  \tilde{\Delta}_{b,L,\beta,\theta}(l,k) =K+\Delta_{b,\theta}(l,k),
\end{eqnarray*}
with $\displaystyle  K:= \beta\left(\left\lceil 1+\sqrt{2L^2+4}\right\rceil\right)$,
$M:=2(b^2+1)(k+1)^2$ and $b\in {\mathbb N}$ is such that $b\geq \|x_K-p\|$ for some fixed point $p$ of $T$.

Then  for all $x\in C$,
\begin{enumerate}
\item  $\displaystyle  \liminf_{n\to\infty} \|z_n - Tz_n\|=0$ with modulus of liminf $\Delta_{b,\theta}$;
\item  $\displaystyle  \liminf_{n\to\infty} \|x_n - Tx_n\|=0$ with modulus of liminf $\tilde{\Delta}_{b,L,\beta,\theta}$.
\end{enumerate}
\end{theorem}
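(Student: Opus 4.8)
The plan is to extract both statements from the single recursive inequality \eqref{id-8} established in Lemma~\ref{lem-useful-last}(ii), by running the classical ``bounded quantity versus divergent series'' argument in quantitative form: one argues by contradiction, but only over a finite window of indices, so that no unbounded search is hidden. Throughout I fix a fixed point $p$ of $T$ and a natural number $b\geq\|x_K-p\|$ as in the statement.

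The first observation I would make is that, dropping the nonnegative term $\frac12\alpha_n\beta_n\|z_n-Tz_n\|^2$ on the right-hand side of \eqref{id-8}, the sequence $(\|z_n-p\|^2)_{n}$ is nonincreasing; hence $\|z_n-p\|^2\leq\|z_0-p\|^2=\|x_K-p\|^2\leq b^2$ for every $n\in{\mathbb N}$. This monotonicity is what lets the single bound $b$ govern the entire tail of the iteration, and it is also the only place where the hypothesis that $T$ has a fixed point is used — no compactness of $C$ is needed here, as anticipated in the Introduction.

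For part (i), fix $l,k\in{\mathbb N}$ and abbreviate $Q:=\theta(l+M)=\Delta_{b,\theta}(l,k)$. Suppose, towards a contradiction, that $\|z_n-Tz_n\|>\frac1{k+1}$ for every $n\in[l,Q]$. Telescoping \eqref{id-8} from $n=l$ to $n=Q$ and using $\|z_{Q+1}-p\|^2\geq0$ and $\|z_l-p\|^2\leq b^2$ yields $\frac12\sum_{n=l}^{Q}\alpha_n\beta_n\|z_n-Tz_n\|^2\leq b^2$, and the contradiction hypothesis then gives $\sum_{n=l}^{Q}\alpha_n\beta_n\leq 2b^2(k+1)^2$. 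On the other hand, since $\theta$ is a rate of divergence of $\sum\alpha_n\beta_n$ we have $\sum_{n=0}^{Q}\alpha_n\beta_n\geq l+M$, while $\alpha_n\beta_n\leq1$ gives $\sum_{n=0}^{l-1}\alpha_n\beta_n\leq l$; subtracting, $\sum_{n=l}^{Q}\alpha_n\beta_n\geq M=2(b^2+1)(k+1)^2$, which is strictly larger than $2b^2(k+1)^2$ — contradiction. Hence there is some $N\in[l,Q]$ with $\|z_N-Tz_N\|\leq\frac1{k+1}$, which is exactly the claim that $\Delta_{b,\theta}$ is a modulus of liminf for $(\|z_n-Tz_n\|)$. (The window $[l,Q]$ is nonempty, since $\sum_{n=0}^{Q}\alpha_n\beta_n\geq l+M$ together with $\alpha_n\beta_n\leq1$ forces $Q+1\geq l+M\geq l$.)

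Part (ii) then follows by transporting part (i) along the shift $z_n=x_{n+K}$ from \eqref{def-zn}: given $l,k$, part (i) produces $N'\in[l,\Delta_{b,\theta}(l,k)]$ with $\|x_{N'+K}-Tx_{N'+K}\|=\|z_{N'}-Tz_{N'}\|\leq\frac1{k+1}$, and then $N:=N'+K$ lies in $[l+K,\,K+\Delta_{b,\theta}(l,k)]\subseteq[l,\tilde\Delta_{b,L,\beta,\theta}(l,k)]$, as required. The argument is essentially routine once Lemma~\ref{lem-useful-last}(ii) is in hand; the only points calling for care are bookkeeping ones — the strict-versus-nonstrict gap is precisely what the summand ``$+1$'' in $M=2(b^2+1)(k+1)^2$ buys, the monotonicity of $(\|z_n-p\|^2)$ must be invoked to propagate the bound $b$, and in part (ii) the shift by $K$ must be tracked so that the index $N$ still lies above $l$. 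I do not anticipate a genuine obstacle beyond this accounting.
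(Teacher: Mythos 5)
Your proposal matches the paper's proof essentially step for step: telescope \eqref{id-8} over the window $[l,\Delta_{b,\theta}(l,k)]$, use the fixed point $p$ together with the nonincreasing property of $(\|z_n-p\|^2)$ to bound the telescoped sum by $b^2$, then pit this against the divergence rate $\theta$ to derive the contradiction, and finally shift by $K$ for part (ii). The only cosmetic difference is that the paper phrases the contradiction as $\|z_{\Delta+1}-p\|^2\leq -1$ while you compare the two bounds on $\sum_{n=l}^{Q}\alpha_n\beta_n$ directly; the content is the same and your argument is correct.
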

\begin{proof}
Let $x\in C$, $p\in Fix(T)$ and $b$ as in the hypothesis. We denote, for simplicity, $\Delta:=\Delta_{b,\theta}(l,k)$.
\begin{enumerate}
\item  We have to prove that 
\begin{equation}
\forall l\in{\mathbb N}\,\,\forall k\in{\mathbb N} \,\, \exists N\in[l,\Delta]\,\,\,\left(\|z_N - Tz_N\| \leq \frac1{k+1} \right).
\label{quant-modinf-eq}
\end{equation}

Remark first that, since $\theta$ is a rate of divergence for $\sum_{n=0}^{\infty}\alpha_n\beta_n$ and $\alpha_n,\beta_n$ 
are sequences in $[0,1]$, we have that $\theta(n)\geq n-1$ for all $n\in {\mathbb N}$. Then $\Delta\geq l+M-1\geq l$, 
as $M\geq 1$.

By \eqref{id-8}, we get that for all $n\in{\mathbb N}$, 
\begin{equation}
\|z_{n+1}-p\|^2 \leq \|z_n-p\|^2-\frac12\alpha_n\beta_n\|x_n-Tx_n\|^2. \label{liminf-1}
\end{equation}
As an immediate consequence, it follows that $\|z_{n+1}-p\|\leq \|z_n-p\|$ for all $n\in{\mathbb N}$. Thus,
$b\geq \|x_K-p\|=\|z_0-p\|\geq \|z_n-p\|$ for all $n\in{\mathbb N}$.

Assume by contradiction that \eqref{quant-modinf-eq} does not hold, hence $\|z_n-Tz_n\|> \frac1{k+1}$ 
for all $n\in [l,\Delta ]$. Adding \eqref{liminf-1} for $n:=l, \ldots, \Delta$, we get that 
\begin{eqnarray*}
   \|z_{\Delta+1}-p\|^2 &\leq &  \|z_l- p\|^2 - \frac{1}{2}\sum_{n=l}^{\Delta}
   \alpha_n\beta_n \|z_n - Tz_n\|^2 \leq b^2 - \frac1{2(k+1)^2}\sum_{n=l}^{\Delta}\alpha_n\beta_n. 
\end{eqnarray*}	
Remark now that 
\begin{eqnarray*}
   \sum_{n=l}^{\Delta}\alpha_n\beta_n = \sum_{n=0}^{\theta(l+M)}\alpha_n\beta_n - 
   \sum_{n=0}^{l-1}\alpha_n\beta_n
   \geq l+M - l = M.
\end{eqnarray*}
It follows that 
\[
   \|z_{\Delta+1}-p\|^2 \leq b^2 - \frac1{2(k+1)^2}M =-1.
\]
We have obtained a contradiction.
\item By (i), there exists $N\in[l,\Delta]$ such that 
\eqref{quant-modinf-eq} holds.  Let $\tilde{N}:=K+N$. Then 
$l\leq N\leq \tilde{N}\leq K+\Delta=K+\Delta_{b,\theta}(l,k)= \tilde{\Delta}_{b,L,\beta,\theta}(l,k) $ and $x_{\tilde{N}}=z_N$, so
$$\|x_{\tilde{N}} - Tx_{\tilde{N}}\|=\|z_N - Tz_N\| \leq \frac1{k+1}.$$
\end{enumerate}
\end{proof}

\begin{fact}
If $C$ is bounded, then, obviously, the above theorem holds with $b\in{\mathbb N}$ being an upper bound on the diameter of $C$.
\end{fact}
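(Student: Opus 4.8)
The plan is simply to check that an integer upper bound $b$ on the diameter of $C$ already fulfils the hypothesis imposed on $b$ in Theorem~\ref{Ishikawa-liminf-quant}, so that the assertion follows with no modification of the proof. Recall that that theorem requires a $b\in{\mathbb N}$ with $b\geq\|x_K-p\|$ for \emph{some} fixed point $p$ of $T$, and that $b$ enters its proof only through this one inequality -- which, by \eqref{id-8}, propagates to $b\geq\|z_n-p\|$ for all $n\in{\mathbb N}$ -- together with the derived quantity $M:=2(b^2+1)(k+1)^2$ that occurs in the moduli $\Delta_{b,\theta}$ and $\tilde{\Delta}_{b,L,\beta,\theta}$.

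First I would invoke the standing hypothesis of Theorem~\ref{Ishikawa-liminf-quant} that $T$ has fixed points, and fix an arbitrary $p\in Fix(T)$; since $T:C\to C$, we have $p\in C$. Next I would observe that $x_K\in C$, which is immediate because every Ishikawa iterate lies in $C$: $x_{n+1}=(1-\alpha_n)x_n+\alpha_nTy_n$ is a convex combination of $x_n\in C$ and $Ty_n\in C$, where in turn $y_n=\beta_nTx_n+(1-\beta_n)x_n\in C$, using convexity of $C$ and $T(C)\subseteq C$. Consequently $\|x_K-p\|\leq\mathrm{diam}(C)\leq b$, so this $b$ verifies the hypothesis of Theorem~\ref{Ishikawa-liminf-quant}. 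Applying that theorem then yields both conclusions (i) and (ii) verbatim, with the moduli formed from the present $b$.

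There is no genuine obstacle in this argument; the only point meriting a moment's attention is the membership $x_K\in C$, and that is a routine consequence of the convexity of $C$ and of $T$ mapping $C$ into itself. One may also note that it is precisely the boundedness of $C$ that guarantees the existence of such a natural number $b$.
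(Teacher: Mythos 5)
Your argument is correct and is precisely the ``obvious'' reasoning the paper leaves implicit (the remark is stated without proof): any fixed point $p$ and the iterate $x_K$ both lie in $C$, so $\|x_K-p\|\le\operatorname{diam}(C)\le b$, which is exactly the hypothesis on $b$ in Theorem~\ref{Ishikawa-liminf-quant}. Your verification that the Ishikawa iterates remain in $C$ (by convexity of $C$ and $T(C)\subseteq C$) is the one small detail worth spelling out, and you do it correctly.
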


We get some immediate consequences. 

\begin{corollary}\label{afp-bound-xn-zn}
In the hypotheses of the above theorem, $\Delta'_{b,\theta}:{\mathbb N}\to{\mathbb N}$ is an approximate fixed point 
bound (with respect to $T$) for $(z_n)$ and
$\tilde{\Delta'}_{b,L,\beta,\theta}:{\mathbb N}\to{\mathbb N}$ is an approximate fixed point bound for $(x_n)$, where
\begin{eqnarray*}
\Delta'_{b,\theta}(k) &:=& \Delta_{b,\theta}(0,k)=\theta(M), \text{ and}\\
 \tilde{\Delta'}_{b,L,\beta,\theta}(k)  &:=& \tilde{\Delta}_{b,L,\beta,\theta}(0,k)=K+\theta(M). 
\end{eqnarray*}
\end{corollary}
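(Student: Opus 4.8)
The plan is to read off Corollary \ref{afp-bound-xn-zn} directly from Theorem \ref{Ishikawa-liminf-quant} by specializing the outer parameter $l$ in the modulus of liminf to $0$, which is precisely the reduction already noted in the text right after the definition of a modulus of liminf (namely, that an approximate fixed point bound is obtained from a modulus of liminf by setting $l:=0$).

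Concretely, I would argue as follows. By part (i) of Theorem \ref{Ishikawa-liminf-quant}, $\Delta_{b,\theta}$ is a modulus of liminf for $(\|z_n-Tz_n\|)$, so for every $k\in{\mathbb N}$ there exists $N\in[0,\Delta_{b,\theta}(0,k)]$ with $\|z_N-Tz_N\|\leq\frac1{k+1}$; in particular such an $N$ satisfies $N\leq\Delta_{b,\theta}(0,k)=\theta(M)=\Delta'_{b,\theta}(k)$, which is exactly the statement that $\Delta'_{b,\theta}$ is an approximate fixed point bound for $(z_n)$ with respect to $T$. The same reasoning applied to part (ii) of Theorem \ref{Ishikawa-liminf-quant} gives, for each $k$, an $N\leq\tilde{\Delta}_{b,L,\beta,\theta}(0,k)=K+\theta(M)=\tilde{\Delta'}_{b,L,\beta,\theta}(k)$ with $\|x_N-Tx_N\|\leq\frac1{k+1}$, so $\tilde{\Delta'}_{b,L,\beta,\theta}$ is an approximate fixed point bound for $(x_n)$. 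The only computations involved are unfolding the definitions $\Delta_{b,\theta}(l,k):=\theta(l+M)$ and $\tilde{\Delta}_{b,L,\beta,\theta}(l,k)=K+\Delta_{b,\theta}(l,k)$ at $l=0$, which give the displayed formulas for $\Delta'_{b,\theta}$ and $\tilde{\Delta'}_{b,L,\beta,\theta}$.

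There is essentially no obstacle here: the corollary is a pure specialization of the theorem together with the elementary observation (made explicitly in the section on moduli of liminf) that a modulus of liminf with $l=0$ yields an approximate fixed point bound. The one point worth stating for the record is that $\Delta'_{b,\theta}$ and $\tilde{\Delta'}_{b,L,\beta,\theta}$ inherit the hypotheses of Theorem \ref{Ishikawa-liminf-quant} verbatim — that $T$ be an $L$-Lipschitzian pseudo-contraction with fixed points, that $(\alpha_n),(\beta_n)$ satisfy (A1)--(A3), that $\beta$ be a rate of convergence of $(\beta_n)$ and $\theta$ a rate of divergence of $\sum_{n=0}^\infty\alpha_n\beta_n$, and that $b\in{\mathbb N}$ bound $\|x_K-p\|$ for some $p\in Fix(T)$ — and the "In the hypotheses of the above theorem" clause is exactly what carries all of this over. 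Hence the proof is one line invoking Theorem \ref{Ishikawa-liminf-quant} at $l=0$.
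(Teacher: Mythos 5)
Your proof is correct and matches the paper's own argument exactly: the paper proves the corollary in one line by setting $l:=0$ in Theorem~\ref{Ishikawa-liminf-quant}, which is precisely the specialization you carry out. Your additional unfolding of the definitions and the remark about inheriting hypotheses are just elaborations of that same step.
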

\begin{proof}
As indicated before, we may just let $l:=0$ in the above theorem.
\end{proof}

In the case when $\alpha_n=\beta_n=\frac1{\sqrt{n+1}}$ we get a modulus of liminf of exponential growth.

\begin{corollary}\label{cor-exp}
In the hypotheses of the above theorem, assume further that $\alpha_n=\beta_n=\frac1{\sqrt{n+1}}$. Then, for all $x\in C$, 
$\displaystyle  \liminf_{n\to\infty} \|x_n - Tx_n\|=0$ with modulus of liminf $\Gamma_{b,L}$, given by:
\[\Gamma_{b,L}(l,k):= \left(\left\lceil 1+\sqrt{2L^2+4}\right\rceil + 1\right)^2 +4^{l+2(b^2+1)(k+1)^2}.\]
\end{corollary}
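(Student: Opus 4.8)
The plan is to instantiate the general modulus of liminf $\tilde{\Delta}_{b,L,\beta,\theta}$ from Theorem~\ref{Ishikawa-liminf-quant} with the specific sequences $\alpha_n = \beta_n = \frac{1}{\sqrt{n+1}}$ and then supply concrete values for the two moduli $\beta$ and $\theta$ that appear as parameters. First I would compute a rate of convergence $\beta$ for $(\beta_n) = \left(\frac{1}{\sqrt{n+1}}\right)$: since $\frac{1}{\sqrt{n+1}} \leq \frac{1}{k+1}$ holds precisely when $n+1 \geq (k+1)^2$, i.e. $n \geq (k+1)^2 - 1$, we may take $\beta(k) := (k+1)^2$ (or $(k+1)^2-1$). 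Consequently, $K = \beta\left(\left\lceil 1+\sqrt{2L^2+4}\right\rceil\right) = \left(\left\lceil 1+\sqrt{2L^2+4}\right\rceil + 1\right)^2$, which is exactly the first summand in $\Gamma_{b,L}(l,k)$.

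Next I would produce a rate of divergence $\theta$ for $\sum_{n=0}^\infty \alpha_n\beta_n = \sum_{n=0}^\infty \frac{1}{n+1}$, the harmonic series. The standard estimate is $\sum_{n=0}^{m} \frac{1}{n+1} = \sum_{j=1}^{m+1}\frac1j \geq \ln(m+2)$ (for instance via $\frac1j \geq \int_j^{j+1}\frac{dt}{t}$), so $\sum_{n=0}^{m}\frac{1}{n+1} \geq n$ as soon as $\ln(m+2) \geq n$, i.e. $m \geq e^n - 2$. To stay within $\mathbb{N}$ and to match the base-$4$ form appearing in the statement, I would instead use the cruder bound $\sum_{n=0}^{4^n - 2}\frac{1}{m+1} \geq n$, which follows from $\ln(4^n) = n\ln 4 > n$; thus $\theta(n) := 4^n - 2$ (or $4^n$, absorbing the harmless slack) is a rate of divergence. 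Then $\Delta_{b,\theta}(l,k) = \theta(l+M)$ with $M = 2(b^2+1)(k+1)^2$, giving $\theta(l+M) = 4^{\,l+2(b^2+1)(k+1)^2} - 2 \leq 4^{\,l+2(b^2+1)(k+1)^2}$.

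Finally I would assemble these pieces: $\tilde{\Delta}_{b,L,\beta,\theta}(l,k) = K + \Delta_{b,\theta}(l,k) \leq \left(\left\lceil 1+\sqrt{2L^2+4}\right\rceil + 1\right)^2 + 4^{\,l+2(b^2+1)(k+1)^2} = \Gamma_{b,L}(l,k)$, and since enlarging a modulus of liminf preserves the defining property (the witness $N \in [l, \tilde\Delta(l,k)]$ still lies in $[l, \Gamma_{b,L}(l,k)]$), Theorem~\ref{Ishikawa-liminf-quant}(ii) immediately yields that $\Gamma_{b,L}$ is a modulus of liminf for $(\|x_n - Tx_n\|)$. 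I should also verify that conditions (A1)--(A3) hold for this choice: (A1) is clear since $\beta_n \to 0$; (A2) is the divergence of the harmonic series; (A3) holds with equality since $\alpha_n = \beta_n$. One mild point to check is that $\beta_n \in [0,1]$ and that the formula for $\Gamma_{b,L}$ does not depend on $b$ beyond what is inherited through $M$ — which is fine, as $b$ still appears in the exponent. The only real subtlety, and the step I would be most careful about, is getting the rate of divergence of the harmonic series right and choosing the constant ($4$ versus $e$) so that the arithmetic of rounding stays valid in $\mathbb{N}$ while still matching the clean closed form claimed in the statement; everything else is bookkeeping.
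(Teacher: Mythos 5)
Your proposal is correct and matches the paper's approach exactly: the paper's proof consists precisely of the observation that $\beta(k):=(k+1)^2$ is a rate of convergence for $\left(\frac{1}{\sqrt{n+1}}\right)$ and $\theta(n):=4^n$ is a rate of divergence for $\sum\frac{1}{n+1}$, which you derive and then plug into $\tilde{\Delta}_{b,L,\beta,\theta}$ from Theorem~\ref{Ishikawa-liminf-quant}. The only cosmetic difference is that you first compute $\theta(n)=4^n-2$ and then relax to $4^n$ (noting that a modulus of liminf can always be enlarged), whereas the paper takes $\theta(n)=4^n$ directly to obtain $\Gamma_{b,L}$ with equality rather than inequality.
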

\begin{proof}
One can easily see that $\beta(k):=(k+1)^2$ is a rate of convergence for $\left(\beta_n=\frac1{\sqrt{n+1}}\right)$ 
and that $\theta(n) := 4^n$ is a rate of divergence for the sequence $\left(\alpha_n\beta_n = \frac{1}{n+1}\right)$. 
\end{proof}

\begin{corollary}
In the hypotheses of the above theorem, we have that for all $x \in C$, $\displaystyle  \liminf_{n\to\infty} \|x_n - x_{n+1}\|=0$ with modulus of liminf $\hat{\Delta}_{b,L,\beta,\theta}$, given by:
\[\hat{\Delta}_{b,L,\beta,\theta}(l,k) := \tilde{\Delta}_{b,L,\beta,\theta}(l, k'),\]
where $k':=\lceil(1+L)(1+k)\rceil$.
\end{corollary}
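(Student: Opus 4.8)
The plan is to reduce the statement directly to Theorem~\ref{Ishikawa-liminf-quant}(2) using the uniform linear control of $\|x_n-x_{n+1}\|$ by $\|x_n-Tx_n\|$ furnished by Lemma~\ref{lem-1plusL}. So I would fix $x\in C$ and $l,k\in{\mathbb N}$, set $k':=\lceil(1+L)(1+k)\rceil$, and apply Theorem~\ref{Ishikawa-liminf-quant}(2) with $k'$ in place of $k$. This produces some
\[N\in\bigl[l,\tilde{\Delta}_{b,L,\beta,\theta}(l,k')\bigr]\qquad\text{with}\qquad \|x_N-Tx_N\|\leq\frac1{k'+1}.\]

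Next I would invoke Lemma~\ref{lem-1plusL} at this index $N$ to get
\[\|x_N-x_{N+1}\|\leq(1+L)\|x_N-Tx_N\|\leq\frac{1+L}{k'+1}.\]
It then remains to check the elementary arithmetic: since $k'=\lceil(1+L)(1+k)\rceil\geq(1+L)(1+k)$, we have $k'+1\geq(1+L)(k+1)$, and therefore $\dfrac{1+L}{k'+1}\leq\dfrac1{k+1}$. Finally, $N\leq\tilde{\Delta}_{b,L,\beta,\theta}(l,k')=\hat{\Delta}_{b,L,\beta,\theta}(l,k)$ by definition, so $N\in[l,\hat{\Delta}_{b,L,\beta,\theta}(l,k)]$, and since $l$ and $k$ were arbitrary this is exactly the assertion that $\hat{\Delta}_{b,L,\beta,\theta}$ is a modulus of liminf for $(\|x_n-x_{n+1}\|)$; in particular $\liminf_{n\to\infty}\|x_n-x_{n+1}\|=0$.

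There is no genuine obstacle here: the whole argument is the composition of Lemma~\ref{lem-1plusL} with part~(2) of the quantitative liminf theorem, and the only point requiring a moment's attention is the choice of $k'$, which must be large enough that the factor $1+L$ coming from Lipschitzianity is absorbed — this is precisely what the ceiling of $(1+L)(1+k)$ guarantees. Note also that $L$-Lipschitzianity is already among the standing hypotheses of Theorem~\ref{Ishikawa-liminf-quant}, so no new assumption is needed, and the bound depends on the same data $b,L,\beta,\theta$ as before.
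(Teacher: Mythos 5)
Your proof is correct and follows exactly the same route as the paper's: apply Theorem~\ref{Ishikawa-liminf-quant}(2) at precision $k'=\lceil(1+L)(1+k)\rceil$ to find $N\in[l,\tilde{\Delta}_{b,L,\beta,\theta}(l,k')]$ with $\|x_N-Tx_N\|\le\frac1{k'+1}$, then apply Lemma~\ref{lem-1plusL} and the elementary estimate $\frac{1+L}{k'+1}\le\frac1{k+1}$. The only difference is that you spell out the arithmetic check $k'+1\ge(1+L)(k+1)$, which the paper leaves implicit.
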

\begin{proof}
We know that there is an $N \in [l, \hat{\Delta}_{b,L,\beta,\theta}(l,k')]$ such that $\|x_N - Tx_N\| \leq \frac1{k'+1}$. 
Applying Lemma \ref{lem-1plusL}, we get that 
$$\|x_N - x_{N+1}\| \leq (1+L)\|x_N - Tx_N\| \leq \frac{1+L}{k'+1} \leq \frac1{k+1},$$ 
which was what we needed to show.
\end{proof}

An important class of pseudo-contractions are the $\kappa$-strict pseudo-contractions (where $0\leq \kappa < 1$), 
introduced also in \cite{BroPet67}. They are defined as mappings $T:C\to C$,
satisfying, for all $x,y\in C$,
\begin{equation} 
 \|Tx - Ty\|^2 \leq \|x-y\|^2 + \kappa \|x - Tx- \left(y -T y \right)\|^2. \label{def-k-strict-pseudo-contraction}
\end{equation}
It was proved in \cite[Proposition 2.1.(i)]{MarXu07} that any $\kappa$-strict pseudo-contraction is $L$-Lipschitzian with 
$L:=\frac{1+\kappa}{1-\kappa}$.
Furthermore, one can easily see that nonexpansive mappings coincide with $0$-strict pseudo-contractions. Thus, as 
a consequence of Theorem \ref{Ishikawa-liminf-quant} we get moduli of liminf for $(\|x_n-Tx_n\|)$ when $T$ belongs to 
these classes of mappings, too.  

\section{Uniform closedness and uniform generalized Fej\'er monotonicity}\label{Fejer-closed}

It was shown in \cite{KohLeuNic15} how one may derive the corresponding quantitative results
of a class of theorems stating the strong convergence of iterative algorithms. In the proofs 
of these theorems, compactness goes hand in hand with a property that the iterations typically 
exhibit (to some degree), called Fej\'er monotonicity, so the idea consists in exploiting this notion as much
as possible in order to replace the original arguments with purely
computational ones. It is this strategy that we shall use in the last section in order to obtain our main result. 
Firstly, however, we need to recall some essential notions from \cite{KohLeuNic15}.\\[1mm] 
Let $C$ be a nonempty subset of $H$ and $T:C\to C$ be a mapping with $Fix(T)\ne\emptyset$.

\begin{notation}
We denote $F:=Fix(T)$. 
\end{notation}

We may write $F:=\bigcap_{k \geq 0} AF_k$, where $AF_k$ is the set of all points $x \in C$ such that $\|x-Tx\| \leq \frac1{k+1}$.

The following uniform version of closedness was introduced in a more general context in \cite{KohLeuNic15}.

\begin{definition}
$F$ is called {\em uniformly closed with moduli}
$\delta_F,\omega_F:{\mathbb N}\to{\mathbb N}$  if for all $k\in{\mathbb N}$ and for all $p,q\in C$, 
\[
\|q-Tq\|\leq \frac1{\delta_F(k)+1} \text{~and~} \|p-q\|\le \frac1{\omega_F(k)+1} \quad \text{ imply } \quad 
\|p-Tp\|\leq \frac1{k+1}.
\]
\end{definition}
As pointed out in \cite[Lemma 7.1]{KohLeuNic15}, if $T$ is a uniformly continuous mapping, then  $F$ is uniformly closed
with moduli $\omega_F(k)=\max\{4k+3,\omega_T(4k+3)\}$ and 
$\delta_F(k)=2k+1$, where $\omega_T$ is a modulus of uniform continuity of $T$ -- that is, a mapping 
$\omega_T:{\mathbb N}\to{\mathbb N}$  such that  
\[
\|p-q\|\leq \frac1{\omega_T(k)+1} \quad \text{ implies }\quad  \|Tp-Tq\|\leq  \frac1{k+1} 
\]
for all $k\in{\mathbb N}$ and all $p,q\in C$.

\begin{proposition}\label{F-unif-closed-meta}
Assume that  $T$ is an $L$-Lipschitzian 
pseudo-contraction with $F\ne\emptyset$. Then $F$ is a uniformly closed subset of $C$ with moduli 
\[\omega_F(k)=\lceil L\rceil(4k+4) \quad \text{ and }\quad
\delta_F(k)=2k+1.\]
\end{proposition}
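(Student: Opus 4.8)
The plan is to instantiate the general criterion from \cite[Lemma 7.1]{KohLeuNic15}: an $L$-Lipschitzian map is in particular uniformly continuous, with an explicit modulus of uniform continuity $\omega_T$, so the cited lemma immediately yields that $F$ is uniformly closed with moduli expressed in terms of $\omega_T$. All that is left is to (a) produce a legitimate $\omega_T$ for an $L$-Lipschitzian map, and (b) simplify the resulting expressions $\delta_F(k)=2k+1$ and $\omega_F(k)=\max\{4k+3,\omega_T(4k+3)\}$ to the clean closed forms in the statement.

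First I would check that $\omega_T(k):=\lceil L\rceil(k+1)-1$ is a modulus of uniform continuity for $T$: if $\|p-q\|\le \frac1{\omega_T(k)+1}=\frac1{\lceil L\rceil(k+1)}$, then $\|Tp-Tq\|\le L\|p-q\|\le \frac{L}{\lceil L\rceil(k+1)}\le \frac1{k+1}$, using $L\le\lceil L\rceil$. (One should be slightly careful when $L<1$ so that $\lceil L\rceil=1$; the inequality still holds since then $L\le 1=\lceil L\rceil$. Also $\omega_T$ must land in ${\mathbb N}$, which it does since $\lceil L\rceil\ge 1$.) Then $\delta_F(k)=2k+1$ is exactly the value from the cited lemma, with nothing to simplify.

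For $\omega_F$, the cited lemma gives $\omega_F(k)=\max\{4k+3,\omega_T(4k+3)\}$. Here $\omega_T(4k+3)=\lceil L\rceil(4k+4)-1$, and since $\lceil L\rceil\ge 1$ this dominates $4k+3$, so $\omega_F(k)=\lceil L\rceil(4k+4)-1$. Finally, I would observe that enlarging a modulus of uniform closedness only weakens the hypothesis $\|p-q\|\le \frac1{\omega_F(k)+1}$, so one may replace $\lceil L\rceil(4k+4)-1$ by the slightly larger $\lceil L\rceil(4k+4)$ and still have a valid modulus; this produces exactly the stated $\omega_F(k)=\lceil L\rceil(4k+4)$.

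There is no real obstacle here — the content is entirely in \cite{KohLeuNic15}, and the work is bookkeeping: exhibiting a correct $\omega_T$ and checking the monotonicity/slack arguments that let one pass to the tidier constants. The only point requiring a moment's attention is the edge case $L<1$ (where $\lceil L\rceil=1$), to make sure the divisions by $\lceil L\rceil$ and the comparison $4k+3\le \lceil L\rceil(4k+4)-1$ remain valid; both do.
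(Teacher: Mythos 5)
Your proposal is correct and follows the same approach as the paper: instantiate \cite[Lemma 7.1]{KohLeuNic15} with an explicit modulus of uniform continuity coming from Lipschitz continuity. The only (cosmetic) difference is your choice $\omega_T(k)=\lceil L\rceil(k+1)-1$, which forces a final step of enlarging $\lceil L\rceil(4k+4)-1$ to $\lceil L\rceil(4k+4)$; the paper simply takes $\omega_T(k)=\lceil L\rceil(k+1)$ (also valid, as $\|p-q\|\le\frac{1}{\lceil L\rceil(k+1)+1}$ implies $\|Tp-Tq\|\le L\|p-q\|\le\frac{1}{k+1}$), which yields $\omega_F(k)=\max\{4k+3,\lceil L\rceil(4k+4)\}=\lceil L\rceil(4k+4)$ directly, with no slack argument needed.
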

\begin{proof} Since $T$ is $L$-Lipschitzian, it follows immediately that $T$ is uniformly continuous with modulus
$\omega_T(k)=\lceil L\rceil(k+1)$. 
\end{proof}

Given two functions $G,H: {\mathbb R}_+\to {\mathbb R}_+$, a sequence $(u_n)$ in $C$ is said to be 
$(G,H)$-Fej\'er monotone w.r.t. $F$ if for all $n,m\in{\mathbb N}$ and all $p\in F$, 
\[H(\|u_{n+m}-p\|)\le G(\|u_n-p\|).\]
This is a natural generalizations of Fej\'er monotonicity, 
which is obtained by putting $G=H=id_{{\mathbb R}^+}$. 
As in  \cite{KohLeuNic15}, we suppose that the mappings $G,H$ satisfy the following 
properties: for all sequences $(a_n)$ in ${\mathbb R}_+$,
\begin{eqnarray*} (G) & \displaystyle \lim_{n\to\infty} a_n=0  \text{ implies } \displaystyle \lim_{n\to\infty} G(a_n)=0  \quad \text{and} \quad (H) & \displaystyle \lim_{n\to\infty} H(a_n)=0   
\mbox{ implies }  \displaystyle \lim_{n\to\infty} a_n=0.
\end{eqnarray*}
These properties allow us to obtain in the general setting some nice properties of Fej\'er monotone sequences,
needed for proving strong convergence.

Equivalent quantitative versions of $(G)$ and $(H)$ assert the existence of moduli $\alpha_G:{\mathbb N}\to{\mathbb N}$  and 
$\beta_H:{\mathbb N} \to{\mathbb N}$ such that for all $k\in{\mathbb N}$ and all $a\in{\mathbb R}_+$, 
\begin{eqnarray*}
 a\le \frac1{\alpha_G(k)+1} \text{ implies } G(a) \le \frac1{k+1}\qquad \text{and} \qquad
H(a)\le \frac1{\beta_H(k)+1} \text{ implies }  a \le  \frac1{k+1}.
\end{eqnarray*}
We say that $\alpha_G$ is a $G$-modulus and $\beta_H$ is an $H$-modulus.

The following uniform version of $(G,H)$-Fej\'er monotonicity was introduced in \cite{KohLeuNic15} 
and is another of the abovementioned notions needed to get our quantitative results.

\begin{definition}\label{dfn-fejer}
A sequence $(u_n)$ in $C$ is called {\em uniformly $(G,H)$-Fej\'er monotone} w.r.t. 
$F$ with modulus $\chi : {\mathbb N}^3 \to {\mathbb N}$ if for all $n,m,r\in{\mathbb N}$, for all $p\in C$ with 
$\|p-Tp\|\leq \frac{1}{\chi(n,m,r)+1}$ and for all $l \leq m$ we have that
\[
H(\|u_{n+l}-p\|))< G(\|u_n-p\|)+\frac{1}{r+1}.
\]
\end{definition}

\begin{proposition}\label{unif-Fej-zn}
Let $C\subseteq H$ be a bounded convex subset, $T:C\to C$ be  
an $L$-Lipschitzian pseudo-contraction  with $F\ne\emptyset$ and $b\in {\mathbb N}$ be an 
upper bound on the diameter of $C$. Assume that  $(\alpha_n)$, $(\beta_n)$ 
satisfy (A1) and (A3) and that $\beta$ is a rate of convergence of $(\beta_n)$. 
Then $(z_n)$ is uniformly $(G,H)$-Fej\' er monotone w.r.t. $F$ with modulus 
\[\chi_b(n,m,r)=8bm(r+1),\]
where $G(a)=H(a)=a^2$. We note that $\alpha_G(k)=\left\lceil \sqrt{k}\right\rceil$ is a $G$-modulus for $G$ 
and that $\beta_H(k)=(k+1)^2$ is a $H$-modulus for $H$.
\end{proposition}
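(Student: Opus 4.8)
The plan is to verify Definition~\ref{dfn-fejer} directly for the sequence $(z_n)$ with $G(a)=H(a)=a^2$ and the claimed modulus $\chi_b(n,m,r)=8bm(r+1)$, using the one-step estimate \eqref{id-7} of Lemma~\ref{lem-useful-last}(i) as the engine. Fix $n,m,r\in{\mathbb N}$, a point $p\in C$ with $\|p-Tp\|\le\frac1{\chi_b(n,m,r)+1}=\frac1{8bm(r+1)+1}$, and $l\le m$; the goal is the strict inequality $\|z_{n+l}-p\|^2 < \|z_n-p\|^2+\frac1{r+1}$. Note first that \eqref{id-7} applies here because the hypotheses of Lemma~\ref{lem-useful-last} are exactly those assumed (Lemma~\ref{lem-betan} needs only (A1) and the rate $\beta$), and $C$ is bounded with diameter bound $b$.

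The key computation is a telescoping of \eqref{id-7}. From $\|z_{j+1}-p\|^2\le\|z_j-p\|^2-\frac12\alpha_j\beta_j\|z_j-Tz_j\|^2+8b\|p-Tp\|$ and the fact that the middle term is $\le 0$, I get $\|z_{j+1}-p\|^2\le\|z_j-p\|^2+8b\|p-Tp\|$ for every $j$. Summing this from $j=n$ to $j=n+l-1$ (an empty sum if $l=0$, in which case the claim is trivial since $0<\frac1{r+1}$) yields
\[
\|z_{n+l}-p\|^2 \le \|z_n-p\|^2 + 8bl\,\|p-Tp\| \le \|z_n-p\|^2 + 8bm\,\|p-Tp\|,
\]
using $l\le m$. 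It then remains to check that $8bm\,\|p-Tp\| < \frac1{r+1}$: since $\|p-Tp\|\le\frac1{8bm(r+1)+1}$, we have $8bm\,\|p-Tp\|\le\frac{8bm}{8bm(r+1)+1}<\frac{8bm(r+1)}{8bm(r+1)+1}\cdot\frac1{r+1}<\frac1{r+1}$, which gives exactly the strict inequality required by Definition~\ref{dfn-fejer}. (A minor point to handle cleanly: if $b=0$ or $m=0$ the denominator degenerates; but then $8bm\|p-Tp\|=0<\frac1{r+1}$ directly, so the estimate holds a fortiori.)

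Finally, the stated $G$- and $H$-moduli must be checked, but these are immediate: for $G(a)=a^2$, if $a\le\frac1{\lceil\sqrt k\rceil+1}$ then $a^2\le\frac1{(\lceil\sqrt k\rceil+1)^2}\le\frac1{k+1}$ since $(\lceil\sqrt k\rceil+1)^2\ge k+1$; for $H(a)=a^2$, if $a^2\le\frac1{(k+1)^2+1}$ then $a\le\frac1{\sqrt{(k+1)^2+1}}\le\frac1{k+1}$. I do not anticipate a serious obstacle here — the argument is essentially a telescoping sum plus a bookkeeping inequality on the modulus; the only thing to be careful about is making the final inequality \emph{strict} (as Definition~\ref{dfn-fejer} demands) rather than just $\le$, which is why the $+1$ in the denominator of $\chi_b$ matters, and handling the degenerate cases $l=0$ and $bm=0$ so that no division by zero occurs.
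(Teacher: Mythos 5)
Your proof is correct and follows exactly the paper's approach: discard the nonpositive $-\frac12\alpha_n\beta_n\|z_n-Tz_n\|^2$ term in \eqref{id-7}, telescope to get $\|z_{n+l}-p\|^2\le\|z_n-p\|^2+8bl\|p-Tp\|$, and bound $8bm\|p-Tp\|$ strictly below $\frac1{r+1}$ using the hypothesis on $\|p-Tp\|$. (One tiny slip: $\frac{8bm}{8bm(r+1)+1}=\frac{8bm(r+1)}{8bm(r+1)+1}\cdot\frac1{r+1}$ is an equality, not a strict inequality, though the subsequent strict step still gives the desired conclusion; the paper does not verify the $G$- and $H$-moduli, which you do and they check out.)
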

\begin{proof}
Let $n,m,r\in{\mathbb N}, l\leq m$ and $p\in C$ be such that 
$\|p-Tp\|\leq \frac{1}{\chi(n,m,r)+1}=\frac{1}{8bm(r+1)+1}$.
As a consequence of \eqref{id-7}, we get that 
\begin{equation}
\|z_{n+1}-p\|^2 \leq  \|z_n-p\|^2 + 8b\|p-Tp\|.\label{lema-fejer-1}
\end{equation}
It follows that 
\begin{eqnarray*}
\|z_{n+l}-p\|^2 &\leq & \|z_n-p\|^2 + 8bl\|p-Tp\| \quad \text{(by induction from \eqref{lema-fejer-1})} \\
&\leq & \|z_n-p\|^2 + 8bm\|p-Tp\| \leq \|z_n-p\|^2 + \frac{8bm}{8bm(r+1)+1} \\
& < & \|z_n-p\|^2 +\frac{1}{r+1}.
\end{eqnarray*}
\end{proof}

\section{A rate of metastability}

In this section we give the main result of the paper, namely a finitary, quantitative version of 
Theorem \ref{main-thm-Ishikawa}. As we have already pointed out, we apply methods developed in \cite{KohLeuNic15} for obtaining
quantitative versions of generalizations of strong convergence results using  Fej\'er monotone 
sequences in totally bounded sets. \\[1mm] 
First, let us recall that a {\it modulus of total boundedness} for a nonempty subset $C\subseteq H$  
is a mapping $\gamma:{\mathbb N}\to{\mathbb N}$ such that for any $k\in{\mathbb N}$ and any sequence $(u_n)$ in $C$ we have that:
\[
\exists \,0\leq i<j\le \gamma(k)\,\,\left(\|u_i-u_j\|\le \frac{1}{k+1}\right).
\]
As pointed out in \cite{KohLeuNic15}, 
where two different moduli are considered, $C$ is totally bounded if and only if $C$ has a modulus of total boundedness.
This quantitative version of total boundedness was used in \cite{Ger08} to obtain, also using proof 
mining, quantitative results in topological dynamics. \\[1mm] 
For any function $f:{\mathbb N} \to {\mathbb N}$, define the function $f^M : {\mathbb N} \to {\mathbb N}$ by:
$$f^M(n):=\max_{0 \leq i \leq n} f(i).$$
Obviously, $f^M\geq f$ and $f$ is nondecreasing.\\[1mm] 
A {\it rate of metastability} for a sequence $(u_n)$ is a 
functional $\Sigma: {\mathbb N} \times {\mathbb N}^{\mathbb N} \to {\mathbb N}$ such that for any $k \in {\mathbb N}$ and any $g: {\mathbb N} \to {\mathbb N}$, the followings holds:

$$ \exists N \leq \Sigma(k,g) \,\forall i,j \in [N, N+g(N)]\,\,\, \left(\|x_i - x_j\| \leq \frac1{k+1}\right).$$

We now proceed to state our main result. Its proof can be found in the last subsection.

\begin{theorem}\label{main-thm-meta}
Let $H$ be a Hilbert space, $C\subseteq H$ a nonempty totally bounded convex subset, $T:C\to C$
an $L$-Lipschitzian pseudo-contraction with $F:=Fix(T)\ne\emptyset$, $(\alpha_n)$, $(\beta_n)$  sequences in $[0,1]$ satisfying  (A1)-(A3) and 
$(x_n)$ be the Ishikawa iteration starting with $x\in C$.
Assume, furthermore, that $\gamma$ is a modulus of total boundedness for $C$, $b\in{\mathbb N}$ is an upper bound on the diameter of $C$, 
$\beta$ is a 
rate of convergence of $(\beta_n)$ and $\theta$ is a rate of divergence of $\sum_{n=0}^{\infty}\alpha_n\beta_n$.

Let $\Sigma_{b,\theta,\gamma,\beta,L}$ and $\Omega_{b,\theta,\gamma,\beta,L}:{\mathbb N}\times{\mathbb N}^{\mathbb N}\to {\mathbb N}$ be defined as in Table \ref{tabel-1}. Then 
\begin{enumerate}
\item $\Sigma_{b,\theta,\gamma,\beta,L}$ is a rate of metastability for $(x_n)$.
\item There exists $N\le \Omega_{b,\theta,\gamma,\beta,L}(k,g)$ such that 
\[\forall i,j\in [N,N+g(N)]\
 \left( \|x_i-x_j\|\le \frac1{k+1} \text{~and~} \|x_i-Tx_i\|\le \frac1{k+1}  \right). \]
\end{enumerate}
\end{theorem}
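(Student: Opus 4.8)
The plan is to reduce Theorem~\ref{main-thm-meta} to the abstract quantitative metastability result for uniformly $(G,H)$-Fej\'er monotone sequences in totally bounded spaces from \cite{KohLeuNic15}, applied to the shifted sequence $(z_n)$. The ingredients are already assembled in the preceding sections: by Corollary~\ref{afp-bound-xn-zn} the sequence $(z_n)$ has the explicit approximate fixed point bound $\Delta'_{b,\theta}$ with respect to $T$; by Proposition~\ref{F-unif-closed-meta} the fixed point set $F$ is uniformly closed with the explicit moduli $\omega_F,\delta_F$; and by Proposition~\ref{unif-Fej-zn} the sequence $(z_n)$ is uniformly $(G,H)$-Fej\'er monotone w.r.t.\ $F$ with modulus $\chi_b$, where $G(a)=H(a)=a^2$ has $G$-modulus $\alpha_G(k)=\lceil\sqrt{k}\rceil$ and $H$-modulus $\beta_H(k)=(k+1)^2$. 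Feeding these moduli, together with a modulus of total boundedness $\gamma$ for $C$ (note $C$ totally bounded forces $C$ bounded, so the diameter bound $b$ and hence $\chi_b$, $\Delta'_{b,\theta}$ make sense), into the general metatheorem of \cite{KohLeuNic15} produces a rate of metastability for $(z_n)$; this is exactly the functional tabulated as $\Sigma_{b,\theta,\gamma,\beta,L}$ (modulo the bookkeeping constant $K$ from \eqref{def-zn}).

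Next I would translate metastability of $(z_n)$ back to metastability of $(x_n)$. Since $z_n = x_{n+K}$ with $K:=\beta(\lceil 1+\sqrt{2L^2+4}\rceil)$ a fixed constant independent of $k$ and $g$, one handles the counterfunction $g$ by passing to the shifted counterfunction $\tilde g(n):=g(n+K)$ (or $g(n-K)$, depending on which direction the shift is set up), obtaining from the result for $(z_n)$ some $N'\le \Sigma^{(z)}(k,\tilde g)$ with $\|z_i-z_j\|\le\frac1{k+1}$ for all $i,j\in[N',N'+\tilde g(N')]$; then $N:=N'+K$ works for $(x_n)$, and absorbing $K$ and the $f\mapsto f^M$ majorization into the bound yields the stated $\Sigma_{b,\theta,\gamma,\beta,L}$. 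For part~(ii) I would additionally invoke the approximate fixed point bound and uniform closedness one more time: within the metastable window $[N,N+g(N)]$ for $(x_n)$, a point $x_i$ that is close to an index where $\|x_n-Tx_n\|$ is small and close to $x_i$ itself is, by uniform closedness of $F$, itself an approximate fixed point; choosing the internal accuracy parameters large enough (this is where the $\omega_F,\delta_F$ moduli and the Fej\'er-monotonicity modulus $\chi_b$ get composed) simultaneously forces $\|x_i-x_j\|\le\frac1{k+1}$ and $\|x_i-Tx_i\|\le\frac1{k+1}$ on that window, giving $\Omega_{b,\theta,\gamma,\beta,L}$. This simultaneous-smallness step is essentially the same "no drift" argument that \cite{KohLeuNic15} already packages for such situations.

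The genuinely delicate part is not any single ingredient but the correct composition and majorization of the moduli so that the resulting closed-form functional matches Table~\ref{tabel-1}: one must track how $\gamma$, the $G$/$H$-moduli, $\chi_b$, $\delta_F$, $\omega_F$ and $\Delta'_{b,\theta}$ nest inside the abstract metatheorem, how the quadratic moduli $\alpha_G,\beta_H$ and the linear-in-$m$ shape of $\chi_b$ interact (these control the rate at which the internal $\varepsilon$'s must shrink across the $\gamma(\cdot)$ iterates), and how the monotone majorant $f^M$ of the counterfunction enters. I expect the verification that the explicitly defined $\Sigma_{b,\theta,\gamma,\beta,L}$ and $\Omega_{b,\theta,\gamma,\beta,L}$ from the table are indeed legitimate instances of the \cite{KohLeuNic15} bounds — rather than the existence of \emph{some} bound — to be the main obstacle, and the proof will consist largely in checking that each hypothesis of the relevant theorem of \cite{KohLeuNic15} is met with precisely these moduli and that the arithmetic of substituting them reproduces the tabulated expression.
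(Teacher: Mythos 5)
Your proposal matches the paper's proof essentially step for step: the paper also applies the abstract metastability theorems of \cite{KohLeuNic15} (Theorem~5.1 for part~(i), Theorem~5.3 for part~(ii)) to the shifted sequence $(z_n)$, with precisely the moduli $\Delta'_{b,\theta}$ from Corollary~\ref{afp-bound-xn-zn}, $\omega_F,\delta_F$ from Proposition~\ref{F-unif-closed-meta}, $\chi_b$ from Proposition~\ref{unif-Fej-zn}, and $G(a)=H(a)=a^2$ with the stated $G$-/$H$-moduli, and then transfers from $(z_n)$ back to $(x_n)$ via the shifted counterfunction $h(n)=g(K+n)$ and $\tilde N := N+K$. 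What remains to turn your proposal into the full proof is exactly what you flagged as the main obstacle: the bookkeeping of how $\gamma,\chi_b,\alpha_G,\beta_H,\delta_F$ and the approximate fixed point bound nest inside the recursion $\Psi_0$ of \cite{KohLeuNic15}, verified by induction to equal the tabulated $(\tilde\Sigma_0)_{b,\theta}$ and $(\tilde\Omega_0)_{b,\theta,L}$.
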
 

\begin{table}[ht!]
\begin{center}
\scalebox{0.95}{\begin{tabular}{ | l |}
\hline
\ \\
$\Sigma_{b,\theta,\gamma,\beta,L}(k,g):=K+\tilde{\Sigma}_{b,\theta,\gamma}(k,h)$, \\[2mm]
\quad $\tilde{\Sigma}_{b,\theta,\gamma}:{\mathbb N}\times{\mathbb N}^{\mathbb N}\to {\mathbb N}, \quad \tilde{\Sigma}_{b,\theta,\gamma}(k,g):=(\tilde{\Sigma}_0)_{b,\theta}(P,k,g)$, \\[2mm]
\quad $(\tilde{\Sigma}_0)_{b,\theta}:{\mathbb N}\times {\mathbb N}\times{\mathbb N}^{\mathbb N}\to {\mathbb N}$, \quad $\displaystyle  (\tilde{\Sigma}_0)_{b,\theta}(0,k,g):= 0$,\\[2mm]
\quad $(\tilde{\Sigma}_0)_{b,\theta}(n+1,k,g) := \theta^M\bigg(2(b^2+1)\big(8b(8k^2+16k+10)g^M\big((\tilde{\Sigma}_0)_{b,\theta}(n,k,g)\big)+1\big)^2\bigg)$,\\[3mm]
$\Omega_{b,\theta,\gamma,\beta,L}(k,g):=K+\tilde{\Omega}_{b,\theta,\gamma,L}(k,h)$,\\[2mm]
\quad $\tilde{\Omega}_{b,\theta,\gamma,L}:{\mathbb N}\times{\mathbb N}^{\mathbb N}\to {\mathbb N}, \quad\tilde{\Omega}_{b,\theta,\gamma,L}(k,g):=(\tilde{\Omega}_0)_{b,\theta,L}(P_0,k,g)$, \\[2mm]
\quad $(\tilde{\Omega}_0)_{b,\theta,L}:{\mathbb N}\times {\mathbb N}\times{\mathbb N}^{\mathbb N}\to {\mathbb N}$, \quad $\displaystyle  (\tilde{\Omega}_0)_{b,\theta,L}(0,k,g):= 0$,\\[2mm]
\quad $(\tilde{\Omega}_0)_{b,\theta,L}(n\!+\! 1,k,g) := \theta^M\!\bigg(2(b^2\!+\! 1)\big(\max\{2k\!+\! 1,8b(8k_0^2\!+\! 16k_0\!+\! 10)g^M((\tilde{\Omega}_0)_{b,\theta,L}(n,k,g))\}\!+\! 1\big)^2\!\bigg)$,\\[3mm]
$K:= \beta\left(\left\lceil 1+\sqrt{2L^2+4}\right\rceil\right)$, \qquad $h(n):=g(K+n)$,\\[2mm]
 $\displaystyle  P :=\gamma\left(\left\lceil \sqrt{8k^2+16k+9}\right\rceil \right)$, \qquad $\displaystyle  k_0 := \left\lceil\frac{\lceil L \rceil (4k+4) -1}{2}\right\rceil$, 
 \qquad $\displaystyle  P_0 :=\gamma\left(\left\lceil \sqrt{8k_0^2+16k_0+9}\right\rceil \right)$.\\[2mm]
\ \\
\hline
\end{tabular}}
\caption{Functionals and constants.}\label{tabel-1}
\end{center}
\end{table}

Theorem~\ref{main-thm-meta}.(i) gives us a highly uniform rate of metastability 
$\Sigma_{b,\theta,\gamma,\beta,L}$, which depends only on the Lipschitz constant $L$, an upper bound $b$ on the diameter of $C$ and a modulus of total
boundedness $\gamma$ for $C$, and the rates $\beta,\theta$ associated to the sequences $(\alpha_n),(\beta_n)$. 
As an immediate consequence, we get the Cauchyness of $(x_n)$ for totally bounded  convex $C$. 
Using \cite[Remark 5.5]{KohLeuNic15}, we may see that Theorem~\ref{main-thm-meta}.(ii) is 
indeed the true finitization of Ishikawa's original statement, i.e. it implies back not only 
the convergence of the iterative sequence, but also the fact that its limit point is a fixed point of $T$.

\begin{corollary}
In the hypotheses of the above theorem, assume further that $\alpha_n=\beta_n=\frac1{\sqrt{n+1}}$.
Then there exists $N\le \Omega'_{b,\gamma,L}(k,g)$ such that 
\[\forall i,j\in [N,N+g(N)]\
 \left( \|x_i-x_j\|\le \frac1{k+1} \text{~and~} \|x_i-Tx_i\|\le \frac1{k+1}  \right), \]
\end{corollary}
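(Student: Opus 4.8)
The plan is to derive the $\alpha_n=\beta_n=\frac{1}{\sqrt{n+1}}$ instance of Theorem~\ref{main-thm-meta}.(ii) by simply specializing the general result: we keep the modulus of total boundedness $\gamma$ and the diameter bound $b$ as abstract parameters, but substitute the concrete rates $\beta$ and $\theta$ that were already identified in Corollary~\ref{cor-exp}. There it is noted that $\beta(k):=(k+1)^2$ is a rate of convergence for $\left(\frac{1}{\sqrt{n+1}}\right)$ and that $\theta(n):=4^n$ is a rate of divergence for $\left(\frac{1}{n+1}\right)$. Feeding these into the definition of $\Omega_{b,\theta,\gamma,\beta,L}$ from Table~\ref{tabel-1} yields the new functional $\Omega'_{b,\gamma,L}$; the only substantive bookkeeping is to observe that $\theta$ is already nondecreasing, so $\theta^M=\theta$, and that $K$ becomes $\beta\left(\left\lceil 1+\sqrt{2L^2+4}\right\rceil\right) = \left(\left\lceil 1+\sqrt{2L^2+4}\right\rceil + 1\right)^2$, which is exactly the constant appearing in $\Gamma_{b,L}$ in Corollary~\ref{cor-exp}.

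First I would state explicitly that, under the additional hypothesis $\alpha_n=\beta_n=\frac{1}{\sqrt{n+1}}$, the pair $(\alpha_n),(\beta_n)$ satisfies (A1)--(A3), as already remarked in the introduction, so Theorem~\ref{main-thm-meta} applies with the chosen $\beta$ and $\theta$. Then I would unfold the recursion for $(\tilde{\Omega}_0)_{b,\theta,L}$ with $\theta(n)=4^n$: the recursive clause becomes
\[(\tilde{\Omega}_0)_{b,4^{(\cdot)},L}(n+1,k,g) = 4^{\,2(b^2+1)\big(\max\{2k+1,\,8b(8k_0^2+16k_0+10)g^M((\tilde{\Omega}_0)_{b,4^{(\cdot)},L}(n,k,g))\}+1\big)^2},\]
with the same base case $0$ and the same auxiliary quantities $k_0=\left\lceil\frac{\lceil L\rceil(4k+4)-1}{2}\right\rceil$ and $P_0=\gamma\left(\left\lceil\sqrt{8k_0^2+16k_0+9}\right\rceil\right)$. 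The final $\Omega'_{b,\gamma,L}(k,g)$ is then $K+(\tilde{\Omega}_0)_{b,4^{(\cdot)},L}(P_0,k,h)$ with $h(n)=g(K+n)$ and the explicit value of $K$ substituted in. The conclusion — existence of $N\le\Omega'_{b,\gamma,L}(k,g)$ with the stated metastability and approximate-fixed-point property on $[N,N+g(N)]$ — is then immediate from Theorem~\ref{main-thm-meta}.(ii).

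There is essentially no obstacle here: the corollary is a pure instantiation, and the ``hard part'' is merely the transcription of the iteratively-defined functional with $\theta$ replaced by $4^{(\cdot)}$, making sure the $\theta^M$ collapses correctly and that no dependence on the abstract $\beta,\theta$ remains in the name $\Omega'_{b,\gamma,L}$. For completeness I would also record that, by the same substitution, Theorem~\ref{main-thm-meta}.(i) specializes to a rate of metastability $\Sigma'_{b,\gamma,L}$ of the analogous exponential-tower shape, should one want the purely metastable (as opposed to metastable-plus-fixed-point) version; but since the statement only asks for the (ii)-type conclusion, it suffices to exhibit $\Omega'_{b,\gamma,L}$ and invoke Theorem~\ref{main-thm-meta}.(ii).

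\begin{proof}
Since $\alpha_n=\beta_n=\frac{1}{\sqrt{n+1}}$, the pair $(\alpha_n),(\beta_n)$ satisfies (A1)--(A3). As in the proof of Corollary~\ref{cor-exp}, $\beta(k):=(k+1)^2$ is a rate of convergence for $(\beta_n)$ and $\theta(n):=4^n$ is a rate of divergence for $\left(\alpha_n\beta_n=\frac{1}{n+1}\right)$. Since $\theta$ is nondecreasing, $\theta^M=\theta$. Substituting these into the definitions of $K$ and $\Omega_{b,\theta,\gamma,\beta,L}$ from Table~\ref{tabel-1}, one obtains $K=\left(\left\lceil 1+\sqrt{2L^2+4}\right\rceil+1\right)^2$ and a functional $\Omega'_{b,\gamma,L}$ depending only on $b,\gamma,L$; the conclusion then follows directly from Theorem~\ref{main-thm-meta}.(ii).
\end{proof}
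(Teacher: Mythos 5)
Your proposal is correct and follows exactly the paper's own approach: the paper's proof consists of the single line ``Use the moduli from Corollary~\ref{cor-exp}'', i.e.\ substitute $\beta(k)=(k+1)^2$ and $\theta(n)=4^n$ into the functional $\Omega_{b,\theta,\gamma,\beta,L}$ from Table~\ref{tabel-1}, which is precisely what you do. Your additional observation that $\theta^M=\theta$ because $4^{(\cdot)}$ is nondecreasing is a correct and helpful piece of bookkeeping that the paper leaves implicit.
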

where $\Omega'_{b,\gamma,L}(k,g):=K_0+(\Omega'_0)_{b,L}(P_0,k,h)$, with 
$K_0  := \left(\left\lceil 1+\sqrt{2L^2+4}\right\rceil + 1\right)^2$,
\begin{eqnarray*} 
(\Omega'_0)_{b,L}(0,k,g) &:=& 0,\\
(\Omega'_0)_{b,L}(n+1,k,g) &:=& 4^{2(b^2+1)\big(\max\{2k+1,8b(8k_0^2+16k_0+10)g^M((\Omega'_0)_{b,L}(n,k,g))\}+1\big)^2}.
\end{eqnarray*} 
and $h,P_0,k_0$ as in Table \ref{tabel-1}.
\begin{proof}
Use the moduli from Corollary~\ref{cor-exp}.
\end{proof}

\subsection{Proof of Theorem \ref{main-thm-meta}}
\begin{enumerate}
\item 
{\bf Claim:} $\tilde{\Sigma}_{b,\theta,\gamma}$ is a rate of metastability for $(z_n)$.\\[1mm]
{\bf Proof of claim:}  
By Proposition \ref{unif-Fej-zn}, $(z_n)$ is uniformly $(G,H)$-Fej\' er monotone  w.r.t. $F$ with modulus  
\[\chi_b(n,m,r)=8bm(r+1),\]
where $G(a)=H(a)=a^2$ with moduli 
$$\alpha_G(k)=\left\lceil \sqrt{k}\right\rceil \quad \text{and} \quad \beta_H(k)=(k+1)^2.$$

Define $\Phi:{\mathbb N}\to {\mathbb N}$ by 
\begin{equation}
\Phi(k):= \theta^M(2(b^2+1)(k+1)^2)
\end{equation}
Then $\Phi$ is nondecreasing and $\Phi$ is an an approximate fixed point bound for $(z_n)$ by Corollary \ref{afp-bound-xn-zn}
and the fact that $\Phi(k)\geq \theta(2(b^2+1)(k+1)^2)$ for all $k$. 

We may now apply \cite[Theorem 5.1]{KohLeuNic15}  for $F$ and $(z_n)$. Using the notations from  
\cite[Theorem 5.1]{KohLeuNic15}, we get in our setting that
\begin{eqnarray*}
\chi_g(n,k) = 8(k+1)bg(n), \quad  \chi^M_g(n,k) = 8(k+1)bg^M(n), \quad P =\gamma\left(\left\lceil \sqrt{8k^2+16k+9}\right\rceil \right)
\end{eqnarray*}
and
\begin{eqnarray*}
\Psi_0(0,k,g,\Phi,\chi,\beta_H)\!\!\!\!\!&=&\!\!\!\!0 \\
\Psi_0(n\!+\!1,k,g,\Phi,\chi,\beta_H)\!\!\!\!\!&=&\!\!\!\!\theta^M\!\bigg(2(b^2+1)\big(8b(8k^2+16k+10)g^M\big(\Psi_0(n,k,g,\Phi,\chi,\beta_H)\big)\!+1\!\big)^2\!\bigg).\\
\end{eqnarray*}
By induction, we have that $\Psi_0(n,k,g,\Phi,\chi,\beta_H) = (\tilde{\Sigma}_0)_{b,\theta}(n,k,g)$. It follows that 
$$\Psi(k,g,\Phi,\chi,\alpha_G,\beta_H,\gamma) = \Psi_0(P,k,g,\Phi,\chi,\beta_H) = (\tilde{\Sigma}_0)_{b,\theta}(P,k,g) = \tilde{\Sigma}_{b,\theta,\gamma}(k,g).$$

Thus, the claim is proved. $\blacksquare$\\

Let $k\in{\mathbb N}$ and $g:{\mathbb N}\to{\mathbb N}$ be arbitrary. Applying the claim, we get $N\leq \tilde{\Sigma}_{b,\theta,\gamma}(k,h)$ such that for 
all $i,j\in [N,N+h(N)]=[N,N+g(K+N)]$,
\[\|z_i-z_j\|\leq \frac1{k+1}.\]
Define  $\tilde{N}:=K+N$. Then $\tilde{N}\leq K+\tilde{\Sigma}_{b,\theta,\gamma}(k,h_g)=\Sigma_{b,\theta,\gamma,\beta,L}(k,g)$ and $x_{\tilde{N}}=z_N$. Let 
$i,j\in[\tilde{N}, \tilde{N}+g(\tilde{N})]=[K+N, K+N+g(K+N)]$ and take $i_0:=i-K, j_0:=j-K$. 
Then $i_0,j_0\in [N, N+g(K+N)]$ and so:
\begin{eqnarray*}
\|x_i-x_j\|=\|x_{i_0+K} - x_{j_0+K}\|=\|z_{i_0}-z_{j_0}\|\leq \frac1{k+1}.
\end{eqnarray*}
\item We apply now \cite[Theorem 5.3]{KohLeuNic15} for $F$ and $(z_n)$. Using the notations from  
\cite[Theorem 5.3]{KohLeuNic15} and using Proposition~\ref{F-unif-closed-meta} we get in our setting that
\begin{eqnarray*}
k_0 =\left\lceil\frac{\lceil L \rceil (4k+4) -1}{2}\right\rceil, \quad \chi_{k, \delta_F}(n,m,r) = 
\max\{2k+1, \chi(n,m,r)\} \\
\end{eqnarray*}
It follows that $\Psi_0(n,k_0,g,\Phi,\chi_{k, \delta_F},\beta_H) = (\tilde{\Omega}_0)_{b,\theta,L}(n,k,g)$ and 
$$\Psi(k_0,g,\Phi,\chi_{k, \delta_F},\alpha_G,\beta_H,\gamma) \!= \!
\Psi_0(P_0,k_0,g,\Phi,\chi_{k, \delta_F},\beta_H)\! =\! (\tilde{\Omega}_0)_{b,\theta,L}(P_0,k,g) \!=\! \tilde{\Omega}_{b,\theta,\gamma,L}(k,g).$$

Thus, we have obtained that for all $k\in{\mathbb N}$ and $g:{\mathbb N}\to{\mathbb N}$, there exists $N\le \tilde{\Omega}_{b,\theta,\gamma,L}(k,g)$ such that 
\[\forall i,j\in [N,N+g(N)]\
 \left( \|z_i-z_j\|\le \frac1{k+1} \text{~and~} \|z_i-Tz_i\|\le \frac1{k+1}  \right). \]
As in (i), one gets immediately that (ii) holds. 
\end{enumerate}

\mbox{}

\noindent
{\bf Acknowledgements:} \\[1mm] 
Lauren\c tiu Leu\c stean and Andrei Sipo\c s were supported by a grant of the Romanian 
National Authority for Scientific Research, CNCS - UEFISCDI, project 
number PN-II-ID-PCE-2011-3-0383.

\end{document}